\newcommand\bb{\mathbb}
\renewcommand*\env@matrix[1][*\c@MaxMatrixCols c]{%
  \hskip -\arraycolsep
  \let\@ifnextchar\new@ifnextchar
  \array{#1}}
\theoremstyle{plain}
\newtheorem{thm}{Theorem}[section]
\newtheorem{lem}[thm]{Lemma}
\newtheorem{prop}[thm]{Proposition}
\newtheorem{cor}[thm]{Corollary}
\theoremstyle{definition}
\newtheorem{defn}[thm]{Definition}
\newtheorem{exmp}[thm]{Example}
\newtheorem{rem}[thm]{Remark}
\theoremstyle{plain}
\DeclareMathOperator{\sheafhom}{\mathscr{H}\text{\kern -3pt {\calligra\large om}}\,}
\DeclareMathOperator{\ECT}{ECT}
\DeclareMathOperator{\LECT}{LECT}
\DeclareMathOperator{\SELECT}{SELECT}
\DeclareMathOperator{\ERT}{ERT}
\renewenvironment{proof}{%
    \vspace{-\parskip}\begin{oldproof}%
    }{%
    \end{oldproof}\vspace{-\parskip}%
}
\let\phi\varphi
\title{Euler Characteristics and Homotopy Types of Definable Sublevel \\Sets, with Applications to Topological Data Analysis}
\author[1]{Mattie Ji$^*$}
\address{Department of Mathematics, University of Pennsylvania}
\email{mji13@sas.upenn.edu}
\author[2]{Kun Meng$^*$}
\address{Department of Statistics, Florida State University}
\email{kmeng@fsu.edu}
\begin{document}
\begin{abstract}
    Given a definable function $f: S \to \mathbb{R}$ on a definable set $S$, we study sublevel sets of the form $S^f_t \coloneqq \{x \in S: f(x) \leq t\}$ for all $t \in \mathbb{R}$. Using o-minimal structures, we prove that the Euler characteristic of $S^f_t$ is right-continuous with respect to $t$. Furthermore, when $S$ is compact, we show that $S^f_{t+\delta}$ deformation retracts to $S^f_t$ for all sufficiently small $\delta > 0$. Applying these results, we also characterize the connections between the following concepts in topological data analysis: the Euler characteristic transform (ECT), smooth ECT, Euler-Radon transform (ERT), and smooth ERT.
\end{abstract}

\subjclass[2020]{Primary: 03C64, 46M20. Secondary: 55N31.}

\maketitle

\section{Introduction}
\def\thefootnote{*}\footnotetext{Equal contributions.}
\noindent Sublevel sets have been widely used in both pure and applied branches of mathematics. Motivated by Morse theory and topological data analysis (TDA), we dedicate this article to exploring the Euler characteristics and homotopy properties of sublevel sets within the realm of tame topology \citep{van1998tame}. As an application to TDA, our results offer new perspectives and techniques for several topological descriptors of shapes and images that have been developed in the literature.

\subsection{Motivation I: Morse Theory} Informally, Morse theory \citep{milnor1963morse, matsumoto2002introduction} studies the topology of differentiable manifolds by analyzing critical points of a class of real-valued smooth functions known as Morse functions. Given a compact manifold $M$ and Morse function $f: M \to \bb R$, Morse theory is interested in sublevel sets of $M$ with respect to $f$, which we define as
\begin{align*}
    M_t^f :=\left\{x\in M: f(x) \le t\right\},\ \ \text{ for all }t \in \mathbb{R}.
\end{align*}
A classical result in Morse theory \citep[][Part I.3]{milnor1963morse} completely classifies the homotopy types of the collection $\{M_t^f: t \in \bb R\}$ based on the critical values of $f$ in $\bb R$. A consequence of this classification that we are interested in is the following \citep[][Remark 3.4, p. 20]{milnor1963morse}.
\begin{thm}\label{thm:morse_result}
    For all $\delta > 0$ sufficiently small, $M_{t+\delta}^f$ deformation retracts onto $M_t^f$. This also implies that the Euler characteristic of $M_{t+\delta}^f$ equals that of $M_t^f$ for all $\delta > 0$ sufficiently small.
\end{thm}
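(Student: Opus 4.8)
The plan is to derive this from the normalized anti-gradient flow technique underlying the basic structure theorems of Morse theory. First I would observe that, since $f$ is a Morse function on the compact manifold $M$, its critical points are non-degenerate, hence isolated by the Morse Lemma, hence finite in number; consequently the set $C \subset \bb R$ of critical values of $f$ is finite. Fixing $t$, I would choose $\delta_0 > 0$ with $(t, t + \delta_0] \cap C = \varnothing$ --- one may take $\delta_0$ smaller than the distance from $t$ to the least critical value exceeding $t$, or $\delta_0$ arbitrary if $t$ dominates all of $C$. Then for every $\delta \in (0, \delta_0]$ the compact set $f^{-1}\big((t, t + \delta]\big)$ contains no critical point of $f$, although $f^{-1}(t)$ itself may contain critical points when $t \in C$; this last point is exactly what makes the statement slightly more than Milnor's isotopy lemma.

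Next I would build the retraction by flowing points down the normalized anti-gradient. Fix a Riemannian metric on $M$, and on the critical-point-free open set $W := f^{-1}\big((t, t + \delta_0)\big)$ set $\xi := \nabla f / \lVert \nabla f \rVert^2$, so that $df(\xi) \equiv 1$ on $W$. For $q \in M_{t+\delta}^f$ with $f(q) > t$, let $s \mapsto \varphi_s(q)$ be the trajectory of $-\xi$ through $q$; since $f(\varphi_s(q)) = f(q) - s$, it is defined and stays in $W$ for $s \in [0, f(q) - t)$, and by compactness of $M$ together with the Morse normal form at each critical point on $f^{-1}(t)$ (which traps the trajectory near that critical point once it is close to level $t$) the limit $q_\infty := \lim_{s \uparrow f(q) - t} \varphi_s(q) \in f^{-1}(t)$ exists. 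Define $H \colon M_{t+\delta}^f \times [0,1] \to M_{t+\delta}^f$ by $H(q, u) = q$ when $f(q) \le t$ and $H(q, u) = \varphi_{u (f(q) - t)}(q)$ when $f(q) > t$, with the convention $H(q, 1) := q_\infty$. Then $f(H(q,u)) = (1-u)f(q) + ut \le t + \delta$, so $H$ is well defined; moreover $H(\cdot, 0) = \id$, each $H(\cdot, u)$ restricts to the identity on $M_t^f$, and $H(\cdot, 1)$ carries $M_{t+\delta}^f$ onto $M_t^f$. Thus $H$ exhibits $M_t^f$ as a (strong) deformation retract of $M_{t+\delta}^f$, provided $H$ is continuous.

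I expect the continuity of $H$ to be the main obstacle, and the only genuinely delicate points are those of $f^{-1}(t)$ when $t \in C$; everywhere else it is the smooth dependence of ODE solutions on initial data (equivalently, on each slab $f^{-1}\big([t + \delta', t + \delta]\big)$ with $0 < \delta' < \delta$, $H$ agrees with the deformation retraction furnished by the isotopy lemma \citep[Thm.~3.1]{milnor1963morse} on the critical-value-free interval $[t+\delta', t+\delta]$). Near a critical point $p \in f^{-1}(t)$ I would pass to Morse coordinates with $f = t - \lvert x^- \rvert^2 + \lvert x^+ \rvert^2$: an anti-gradient trajectory entering a small ball $B_\rho(p)$ at a point $(a^-, a^+)$ with $f > t$ satisfies $x^-(\tau) = a^- e^{2\tau}$, $x^+(\tau) = a^+ e^{-2\tau}$, so it remains in $B_\rho(p)$ and attains level $t$ within distance $O\big(\lvert a^- \rvert^{1/2}\rho^{1/2}\big)$ of $p$ (and converges to $p$ when $a^- = 0$); hence trajectories starting near the local stable manifold terminate uniformly close to $p$, and this smallness, together with the regular behavior of $H$ where $f$ is bounded away from $t$, yields continuity of $H$ at $p$ by a standard compactness argument. (One can instead avoid the explicit estimate by letting $\delta' \downarrow 0$ in the compatible family of retractions onto $M_{t+\delta'}^f$ and verifying the limiting map is continuous; the local analysis at critical values is the essential ingredient either way.)

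Finally, the Euler-characteristic assertion is an immediate corollary: a deformation retract is homotopy equivalent to the ambient space, each of $M_{t+\delta}^f$ and $M_t^f$ is compact and admits a finite CW (indeed smooth handle) decomposition so that $\chi$ is defined, and $\chi$ is a homotopy invariant; hence $\chi\big(M_{t+\delta}^f\big) = \chi\big(M_t^f\big)$ for all sufficiently small $\delta > 0$. This is the classical statement recalled from \citep{milnor1963morse}, and the o-minimal results developed below will recover both conclusions with $f$ merely definable and $M$ replaced by a compact definable set.
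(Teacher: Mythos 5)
The paper does not prove this statement at all: Theorem~\ref{thm:morse_result} is quoted as a classical fact and attributed to Milnor's book (Remark 3.4, p.\ 20), so there is no in-paper proof to compare against. What you wrote is a proof of a result the paper deliberately delegates to the reference.

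Read on its own, your argument is the standard normalized-anti-gradient-flow proof, and it matches the one implicit in Milnor's treatment. You correctly isolate the only real subtlety --- that $f^{-1}(t)$ may contain critical points, so $\xi = \nabla f / \lVert\nabla f\rVert^2$ blows up on the inner boundary of the slab, and the flow only reaches level $t$ in a limit --- and you address it with the Morse normal form. Two details worth tightening: (i) the explicit solution $x^-(\tau)=a^-e^{2\tau}$, $x^+(\tau)=a^+e^{-2\tau}$ presumes a Riemannian metric chosen Euclidean in each Morse chart (or a $C^1$ linearization argument); say so. (ii) For joint continuity of $H$ at a pair $(q_0,1)$ with $q_\infty=p$ critical, the useful observation is that in the Morse chart the squared distance $g(\tau)=\lvert a^-\rvert^2e^{4\tau}+\lvert a^+\rvert^2e^{-4\tau}$ satisfies $g'(\tau)<0$ precisely while $f>t$ and $g'(\tau_*)=0$ at the hitting time $\tau_*$, so once a trajectory is inside the chart it moves \emph{monotonically} toward $p$; combined with smooth dependence on initial data up to some fixed $u^*<1$, this upgrades your ``standard compactness argument'' to an actual $\epsilon$--$\delta$ estimate. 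With those additions the proof is complete, and the Euler-characteristic conclusion follows immediately from homotopy invariance of $\chi$ on compact CW (or handle-decomposable) spaces, as you say.
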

\noindent It is a corollary of Theorem \ref{thm:morse_result} that the function $t\mapsto \chi(M_t^f)$ is right-continuous, where $\chi(\cdot)$ denotes the Euler characteristic.

While Morse theory was originally developed by Marston Morse \citep{Morse1929} and was traditionally a subject in differential topology, it has since then inspired combinatorial adaptations such as discrete Morse theory \citep{Forman2002} and digital Morse theory \citep{Cox2003TopologicalZO} without necessarily requiring any smoothness. A natural question would then be - \textit{when $M$ is some ``shape" instead of a differential manifold and $f$ is no longer even smooth, how do the Euler characteristics and homotopy types of $M_t^f$ behave outside of a smooth category?} This is the question we will explore in this article.

\subsection{Motivation II: Topological Data Analysis}

Shape-valued data have emerged in various scientific domains. Traditionally, in applications, modeling shapes and evaluating (dis-)similarity between shapes have been achieved using either landmark-based or diffeomorphism-based methods \citep{kendall1989survey, dupuis1998variational, grenander1998computational, gao2019gaussianmorphometrics, gao2019gaussian}. However, these methods are not directly applicable to many databases used in applications, as extensively discussed in the literature \citep[e.g.,][]{turner2014persistent, wang2021statistical}. TDA offers innovative approaches to modeling shapes, $S\subseteq\mathbb{R}^n$, without reliance on landmarks or diffeomorphisms \citep{turner2014persistent}. One prominent source of inspiration in TDA is Morse theory---one common practice is to look at topological invariants (e.g., Euler characteristics) of sublevel sets of $S$ with respect to $f: S\rightarrow\mathbb{R}$, which we define as 
\begin{align*}
    S_t^f :=\left\{x\in S: f(x) \le t\right\},\ \ \text{ for all }t \in \mathbb{R}.
\end{align*}
The following special case is of particular importance in TDA: $f(x) = \phi_v(x):= x \cdot v$ and $v\in\mathbb{S}^{n-1}$ is a fixed unit vector. This special case is a building block of the Euler characteristic transform \citep[ECT, ][]{turner2014persistent}. The ECT and related integral transforms are of interest to many topological data analysts and have been widely utilized in applied sciences \citep[e.g.,][]{crawford2020predicting, wang2021statistical, marsh2022detecting, meng2022randomness}. \cite{munch2023invitation} provided a comprehensive survey of the ECT from both theoretical and applied perspectives. The ECT represents shapes utilizing integer-valued functions. Specifically, for a given shape $S$ in $\mathbb{R}^n$, its ECT is defined as the function $\operatorname{ECT}(S):\,(v,t) \mapsto \operatorname{ECT}(S)(v,t):=\chi(S_t^v)$, where $\chi(\cdot)$ denotes the Euler characteristic and $S_t^v:=\left\{x\in S: x\cdot v \le  t\right\}$ for $(v, t) \in \mathbb{S}^{n-1}\times\mathbb{R}$. See Figure \ref{fig:Fig1a} for an illustration. Notably, \cite{ghrist2018persistent} and \cite{curry2022many} conclusively demonstrated that the descriptor $\operatorname{ECT}(S)$ preserves all the information within the shape $S$ when $S$ is compact. Precisely, the shape-to-ECT map $S\mapsto\operatorname{ECT}(S)$ is injective on compact definable sets.

To incorporate the techniques in functional data analysis \citep{hsing2015theoretical} and Gaussian process regression \citep{williams2006gaussian}, \cite{crawford2020predicting} introduced the smooth ECT (SECT) by smoothing the ECT via Lebesgue integration. Precisely, given a shape $S\subseteq\mathbb{R}^n$ bounded by an open ball $B(0,R):=\{x\in\mathbb{R}^n: \Vert x\Vert<R\}$, the SECT of $S$ is defined as $\operatorname{SECT}(S):=\{\operatorname{SECT}(S)(v, t): (v,t)\in\mathbb{S}^{n-1}\times\mathbb{R}\}$, where
\begin{align}\label{eq: def of SECT(S)(v,t)}
    \operatorname{SECT}(S)(v,t):=\int_{-R}^t \chi(S_\tau^v)\,d\tau - \frac{t+R}{2R}\int_{-R}^R \chi(S_\tau^v)\,d\tau,
\end{align}
for all $(v, t) \in \mathbb{S}^{n-1}\times [-R,\,R]$. The $\operatorname{SECT}(S)(v,t)$ defined in Equation~\eqref{eq: def of SECT(S)(v,t)} can be viewed as an analog of a Brownian bridge \citep[][Chapter 21]{klenke2013probability} if we view $\int_{-R}^t \chi(S_\tau^v)\,d\tau$ as an analog of a Brownian motion. The SECT converts shape-valued data (e.g., tumors and mandibular molars of primates) into functional data, which was particularly employed by \cite{meng2022randomness} to perform hypothesis testing on shapes via the analysis of variance for functional data \citep{gorecki2015comparison}. The SECT has been extensively applied in sciences, e.g., organoid morphology \citep{marsh2022detecting}, radiomics \citep{crawford2020predicting}, and evolutionary biology \citep{meng2022randomness}.

The SECT is formulated from the ECT using Lebesgue integrals, as demonstrated in Equation~\eqref{eq: def of SECT(S)(v,t)}. Now, if we are given the SECT of a shape $S$ and want to recover the corresponding $\ECT(S)$, challenges arise due to the nature of Lebesgue integration. Without additional regularity properties (e.g., right continuity) of the function $t\mapsto\operatorname{ECT}(S)(v,t)=\chi(S_t^v)$, one can recover the $\operatorname{ECT}(S)(v,t)$ from the $\operatorname{SECT}(S)(v,t)$ only in the sense of ``almost everywhere for $t$ with respect to Lebesgue measure" rather than ``exactly everywhere." To put it more concretely, given $\{\operatorname{SECT}(S)(v,t)\}_{t\in\mathbb{R}}$ without the knowledge of the regularity of $t\mapsto\operatorname{ECT}(S)(v,t)$, one can only determine the values $\{\operatorname{ECT}(S)(v,t):t\in\mathbb{R}-N\}$ through the Radon-Nikodym derivative with respect to the Lebesgue measure, where $N$ is a measurable subset of $\mathbb{R}$ with Lebesgue measure zero. To recover the values of $\operatorname{ECT}(S)(v,t)$ for $t\in N$, we need the right continuity $t\mapsto\operatorname{ECT}(S)(v,t)=\chi(S_t^v)$, which is analogous to Theorem \ref{thm:morse_result} and is one of the contributions of this article.

Each shape $S$ can be equivalently identified as the indicator function $\mathbbm{1}_S$ of $S$. With this perspective, the ECT can be generalized to take suitable real-valued functions rather than a given shape $S$ (equivalently $\mathbbm{1}_S$). In this paper, we study the following two generalizations of the ECT: 
\begin{enumerate}
    \item \cite{kirveslahti2023representing} proposed the lifted and super lifted Euler characteristic transform (LECT and SELECT) based on computing the Euler characteristics of level sets and superlevel sets, respectively. 
    \item \cite{meng2023Inference} proposed the Euler-Radon transform (ERT) using the Euler integration framework developed by \cite{baryshnikov2010euler} to model grayscale images in medical imaging. Given a suitable compactly supported real-valued function $g: S \subseteq \bb R^n \to \bb R$, which represents a grayscale image, the ERT of $g$ is a function $(v,t) \mapsto \ERT(g)(v,t)$ defined for all $(v,t)\in\bb S^{n-1} \times \bb R$.
\end{enumerate}
The precise definitions of the topological descriptors mentioned above will be provided in Section~\ref{section: Background: O-minimal Structures and Euler Calculus}. An important contribution of this article is to connect these topological descriptors.

\subsection{Overview of Contributions and Article Organization}

As an analog to Theorem~\ref{thm:morse_result} in Morse theory, we prove the following main results in this paper.
\begin{enumerate}
    \item Given a ``definable" shape $S \subseteq \bb R^n$ and a ``definable" function $f: S \to \bb R$, we show in Theorem~\ref{thm:general_right_continuous} that the map $t \mapsto \chi(S^f_t)$ is right-continuous. The notion of ``definability" is a fundamental concept in tame topology \citep{van1998tame} and will be precisely defined in Section~\ref{section: Background: O-minimal Structures and Euler Calculus}. 
    \item Furthermore, if the shape $S$ is compact and $f$ is continuous, we show in Theorem~\ref{thm:homotopy_K} that $S^f_{t + \delta}$ deformation retracts onto $S^f_{t}$ for sufficiently small $\delta > 0$, which is analogous to Theorem \ref{thm:morse_result}.
    \item Using the results presented above, we show in Theorem~\ref{thm: ERT vs. SERT} that the ERT can be recovered from the smooth Euler-Radon transform \citep[SERT,][]{meng2023Inference}. As a corollary of Theorem~\ref{thm: ERT vs. SERT}, one can recover the ECT from the SECT. Additionally, in Corollary \ref{cor: connect the ECT, LECT, and SELECT}, we provide a formula that connects the LECT, SELECT, and ECT.
\end{enumerate}

Importantly, the fact that the ECT can be recovered from the SECT establishes a bridge that connects algebraic topology to functional analysis, potentially inviting probability theory on Banach spaces to the realm of algebraic topology. Specifically, \cite{ghrist2018persistent} and \cite{curry2022many} have shown that a shape $S$ can be recovered from its $\mathrm{ECT}(S)$. Consequently, by our Theorem~\ref{thm: ERT vs. SERT}, a shape $S$---a topological object---can also be recovered from its $\mathrm{SECT}(S)$. Moreover, \cite{meng2022randomness} show that $\mathrm{SECT}(S)$ belongs to the separable Banach space $C(\mathbb{S}^{n-1};\mathcal{H})$, the collection of continuous maps from the unit sphere $\mathbb{S}^{n-1}$ to a reproducing kernel Hilbert space $\mathcal{H}$. Thus, each shape $S$ can be identified as a point, i.e., $\mathrm{SECT}(S)$, in the Banach space $C(\mathbb{S}^{n-1};\mathcal{H})$, which makes it possible to study shapes using tools from functional analysis. For instance, since probability theory on separable Banach spaces is well developed \citep{hairer2009introduction}, one may use probability measures on $C(\mathbb{S}^{n-1};\mathcal{H})$ to analyze the randomness of the shape $S$.

The remainder of this paper is organized as follows. In Section~\ref{section: Background: O-minimal Structures and Euler Calculus}, we review the background and context for o-minimal structures, Euler calculus, ECT, LECT, SELECT, and ERT. In Section~\ref{section: A General Right Continuity Result}, we prove in Theorem~\ref{thm:general_right_continuous} the right continuity of the map $t \mapsto \chi(S^f_t)$ for definable sets $S$ and definable functions $f: S \to \bb R$. In Section~\ref{section: Right Continuity of the Euler Characteristic Transform} and Section~\ref{section: Right Continuity of the Euler-Radon Transform}, we discuss its applications to showing that $t \mapsto \operatorname{ECT}(S)(v,t)$ and $t \mapsto \operatorname{ERT}(g)(v,t)$ are both right-continuous for each fixed $v \in \bb S^{n-1}$ in Theorem~\ref{thm:ECT_right_continuous} and Theorem~\ref{thm:ERT_right_continuous} respectively. In Section~\ref{section: Middle Continuity of the Euler Characteristic}, we will discuss an application of Theorem~\ref{thm:general_right_continuous} in proving a ``middle continuity" result for the Euler characteristic. In Section~\ref{section: Right Continuity of Homotopy Type}, we prove in Theorem~\ref{thm:homotopy_K} that for a compact definable set $K$, $K^f_{t+\delta}$ deformation retracts onto $K^f_t$ for all $\delta > 0$ sufficiently small. In Section~\ref{section: Corollaries of Right Homotopy}, we discuss corollaries of Theorem~\ref{thm:homotopy_K}, including a ``middle continuity" result for homotopy type. As an application of Sections \ref{section: Right continuity} and \ref{section: Homotopy Types}, we also characterize the connections between the ECT, SECT, ERT, and SERT in Section~\ref{section: Applications to the Smooth Euler Characteristic Transform}.

\section{Background}\label{section: Background: O-minimal Structures and Euler Calculus}

In this section, we briefly cover the necessary background in o-minimal structures, Euler calculus, the ECT, and two relevant extensions of the ECT to real-valued definable functions. We refer the reader to \cite{van1998tame} for more details on o-minimal structures, to \cite{curry2012euler} for more details on Euler calculus, and to \cite{ghrist2018persistent} and \cite{curry2022many} for more details on the ECT.

\subsection{O-minimal Structures} 

The goal of o-minimal structures is to create a collection of subsets of Euclidean spaces that abstracts the features of ``well-behaved sets" such as the semialgebraic and semilinear sets \cite[][Chapters 1 and 2]{van1998tame}, while excluding ``poorly-behaved sets" like the topologist's sine curve given by the graph of $x\mapsto \sin(\frac{1}{x})$ in $\bb R^2$. O-minimal structures are defined as follows:
\begin{defn}\label{def:o-minimal}
Suppose we have a sequence $\mathcal{O}=\{\mathcal{O}_n\}_{n \geq 1}$ where $\mathcal{O}_n$ is a Boolean algebra of subsets of $\mathbb{R}^n$ for each $n$. We call $\mathcal{O}$ an \textit{o-minimal structure} on $\mathbb{R}$ if it satisfies the following:
    \begin{enumerate}
        \item If $A \in \mathcal{O}_n$, then $A \times \bb R\in \mathcal{O}_{n+1}$ and $\bb R \times A \in \mathcal{O}_{n+1}$.
        \item $\{(x_1, ..., x_n) \in \bb R^n\ |\ x_i = x_j\} \in \mathcal{O}_n$ for $1 \leq i < j \leq n$.
        \item $\mathcal{O}$ is closed under axis-aligned projections.
        \item $\{r\} \in \mathcal{O}_1$ for all $r \in \bb R$ and $\{(x, y) \in \bb R^2\ |\ x < y\} \in \mathcal{O}_2$.
        \item \label{cond:o1-definable} $\mathcal{O}_1$ is exactly the finite unions of points and open intervals.
        \item $\mathcal{O}_3$ contains the graphs of addition and multiplication.
    \end{enumerate}
\end{defn}
\noindent In Definition \ref{def:o-minimal}, Conditions (1)-(5) form the fundamental definition of o-minimal structures as presented in \cite{van1998tame}. To utilize powerful theorems, such as the ``trivialization theorem" referenced in our Theorem~\ref{thm:general_right_continuous}, one also requires Condition (6). Many authors choose to define o-minimal structures on $\bb R$ to include Condition (6), e.g., \cite{curry2012euler} and \cite{Coste2002ANIT}. A notable consequence of assuming Conditions (1)-(6), due to the Tarski-Seidenberg theorem \citep{TarskiMcKinsey+1951}, is that any o-minimal structures on $\bb R$ defined this way must encompass the collection of all semialgebraic sets \citep[][Example 1.2]{dries_real}. Definition~\ref{def:o-minimal} is also sometimes called an o-minimal expansion of the real numbers.

The concepts in the following definition will be employed in our study. They are commonly used in the theoretical TDA literature \citep[e.g.,][]{ghrist2018persistent, curry2022many, kirveslahti2023representing}. More details regarding these concepts are available in \cite{van1998tame} and \cite{baryshnikov2010euler}.
\begin{defn} 
Suppose an o-minimal structure $\mathcal{O}=\{\mathcal{O}_n\}_{n\ge1}$ on $\mathbb{R}$ is given.
\begin{enumerate}
    \item A subset $S$ of $\bb R^n$ is called a \textit{definable} set if $S \in \mathcal{O}_n$. Throughout this paper, a definable set is also referred to as a \textit{shape}.
    \item Let $X$ be definable. A function $f: X \to \bb R^n$, for some $n$, is said to be \textit{definable} if its graph is definable.
    \item A function that is both continuous and definable, and possesses a continuous, definable inverse, is called a \textit{definable homeomorphism}. Two definable sets are \textit{definably homeomorphic} if there is a definable homeomorphism between them.
    \item A definable function is called \textit{constructible} if it is integer-valued. Without loss of generality, we will restrict the codomain of constructible functions to $\bb Z$.
\end{enumerate}
\end{defn}
\noindent Note that every constructible function is bounded. This is because the image of a constructible function is a definable subset of $\bb Z$, which is a finite union of points in $\bb Z$ by Condition~(\ref{cond:o1-definable}) of Definition~\ref{def:o-minimal}.

\subsection{Euler Calculus} The Euler calculus is based on the observation that the Euler characteristic is finitely additive and well-defined for certain well-behaved subsets of $\bb R^n$. The main theme of Euler calculus is to apply the Euler characteristic as an analog of a signed measure. The subject was originally introduced by \cite{Schapira_1991, schapira1995tomography} and \cite{Viro_1988}.

By the ``cell decomposition theorem" \citep[][Chapter 3, Theorem 2.11]{van1998tame}, any definable set $S$ can be partitioned into cells $S_1, ..., S_N$. The (combinatorial) \textit{Euler characteristic} of $S$ is defined as
\begin{align}\label{eq: def of EC}
    \chi(S) = \sum_{i = 1}^N (-1)^{\dim(S_i)},
\end{align}
where $\dim(S_i)$ denotes the dimension of the cell $S_i$ \citep[][Section 1 of Chapter 4 therein, for a precise definition of dimensions]{van1998tame}. Proposition 2.2 from Chapter 4 of \cite{van1998tame} shows that the value of $\chi(S)$ is independent of the choice of cell decomposition. On a locally compact definable set $K$, the Euler characteristic $\chi(\cdot)$ defined in Equation~\eqref{eq: def of EC} is equivalent to the alternating sum of Betti numbers via the Borel-Moore homology or cohomology with compact support \citep[][Lemma 8.5]{curry2012euler}. That is, $\chi(K)=\sum_{n\in\mathbb{Z}}(-1)^n \cdot \operatorname{dim} H_n^{BM}(K;\mathbb{R})$, where $H_*^{BM}$ denoting the Borel-Moore homology \citep{bredon2012sheaf}. Over compact definable sets, $\chi(S)$ is equal to the alternating sum of Betti numbers from the singular homology. Notably, $\chi(S)$ is a homotopy invariant if $S$ is compact but is only a definable homeomorphism invariant in general.

With the Euler characteristic $\chi(\cdot)$ in Equation~\eqref{eq: def of EC}, the Euler integration functional $\int(\cdot)d\chi$ is defined as follows \citep[also see][Section 3.6]{ghrist2014elementary}.
\begin{defn}
   For any constructible function $g: X \to \bb Z$, we define its \textit{Euler integral} as
   \begin{equation}\label{eq:euler-integral}
      \int_X g \,d\chi \coloneqq \sum_{n=-\infty}^{+\infty} n \cdot \chi\left(g^{-1}(n)\right). 
   \end{equation}
   Since $g$ is constructible, it is bounded, and each $g^{-1}(n)$ is definable. Therefore, Equation~(\ref{eq:euler-integral}) is well-defined.
\end{defn}

It is worth remarking that there is also a different approach to Euler calculus and definable sets using the constructible sheaves. This approach was taken in Schapira's original paper on this topic \citep{Schapira_1991}. We refer the reader to \cite{Kashiwara_Schapira_1990} for a thorough introduction to constructible sheaves.

\subsection{Euler Characteristic Transform} Hereafter, for any subset $S \subseteq \bb R^n$ and function $f: S \to \bb R$, we adopt the following notations for sub-level sets,
\begin{align}\label{eq: sublevel set notation}
    S^{f}_{t} \coloneqq \{x \in S\ |\ f(x) \leq t\},
\end{align}
for all $t \in \bb R$. In the special case where $f(x) = \varphi_v(x) \coloneqq x \cdot v$ for a fixed direction $v \in \bb S^{n-1}$, we write $S^{\varphi_v}_t$ in the following notation instead
\begin{align*}
    & S^{v}_{t} \coloneqq \{x \in S\ |\ x \cdot v \leq t \}.
\end{align*}
Figure \ref{fig:Fig1a} illustrates an example of $S^f_t$ with $S = \{(x, y) \in \mathbb{R}^2\ |\ 1 \leq x^2 + y^2 \leq 4\}$ and \(f(x) = \varphi_v(x) \coloneqq x \cdot v\). Figure \ref{fig:Fig1b} illustrates another example of $S^f_t$ with the same $S$ as in Figure~\ref{fig:Fig1a} and $f(x)=x^2-y^2$.

If $S$ and $f$ are both definable, then $S^f_t$ is definable since our o-minimal structure includes all real semialgebraic sets and is closed under finite intersections.

\begin{figure}[h]
  \centering
  \subfloat[$S^{f}_{t}$ with $f(x) = x \cdot v$.]{\includegraphics[width=0.38\textwidth]{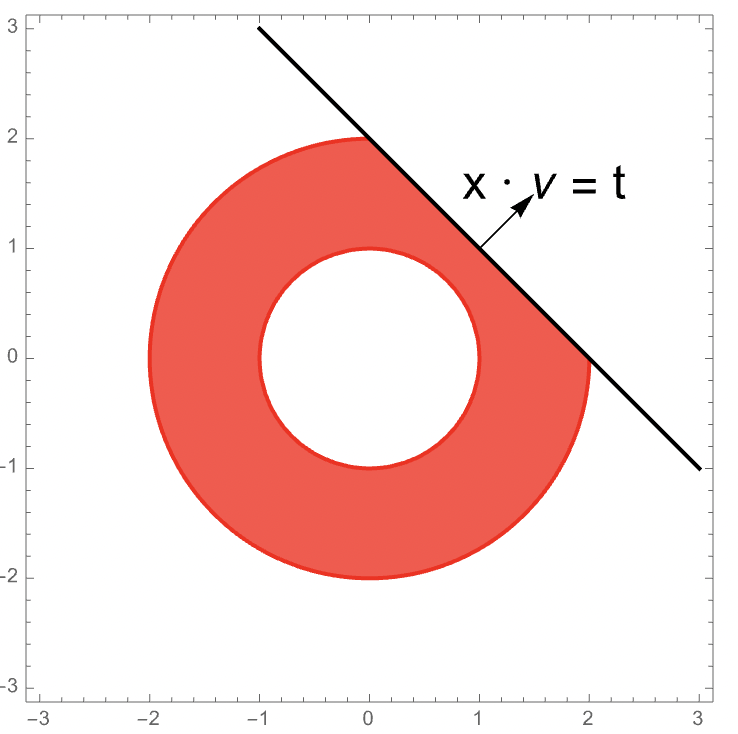}\label{fig:Fig1a}}
  \hfill
  \subfloat[$S^{f}_{t}$ with $f(x) = x^2 - y^2.$]{\includegraphics[width=0.42\textwidth]{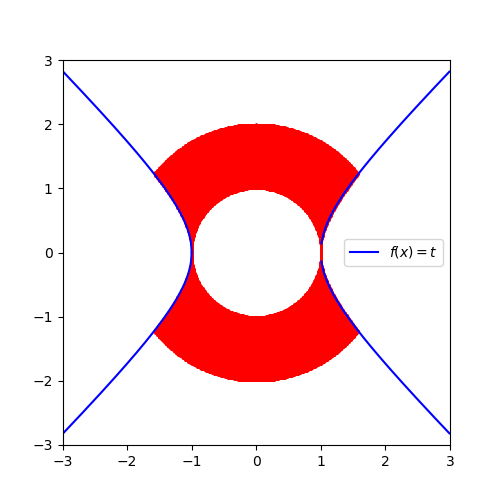}\label{fig:Fig1b}}
  \caption{Illustrations of the sublevel sets of $S$, an annulus in $\mathbb{R}^2$. In each panel, the sublevel set $S^{f}_{t} \coloneqq \{x \in S\ |\ f(x) \leq t\}$ is illustrated by the (red) solid region.}
  \label{fig:sublevel_set}
\end{figure}

The ECT was introduced by \cite{turner2014persistent}. Using the definition of Euler integration from Equation~\eqref{eq:euler-integral}, \cite{ghrist2018persistent} and \cite{curry2022many} provided a representation and generalization of the ECT as follows.
\begin{defn}[Euler Characteristic Transform]\label{def:ECT}
    (i) Let $X \subseteq \bb R^n$ be a definable set and $g: X \to \bb Z$ be a constructible function. Then, the \textit{Euler characteristic transform} of $g$ is a function $(v, t) \mapsto \operatorname{ECT}(g)(v, t)$ on $\bb S^{n-1} \times \bb R$ defined as follows
    \begin{align}\label{eq: ECT of CF}
        \ECT(g)(v, t) \coloneqq \int_X g \cdot \mathbbm{1}_{X^{v}_t}\  d\chi.
    \end{align}
    (ii) In the special case that $X = \bb R^n$ and $g$ is the indicator function of a definable subset $S \subseteq \bb R^n$, we write $\ECT(S) \coloneqq \ECT(\mathbbm{1}\{S\})$, i.e., 
    \begin{align}\label{eq: ECT of CS}
        \operatorname{ECT}(S)(v,t)=\chi(S_t^v),\ \ \ \text{ for all }(v,t)\in\mathbb{S}^{n-1}\times\mathbb{R}.
    \end{align}
\end{defn}
\noindent Equation~\eqref{eq: ECT of CS} represents the version of the ECT as originally proposed in \cite{turner2014persistent}. Meanwhile, \cite{jiang2020weighted} utilized the version depicted in Equation~\eqref{eq: ECT of CF} to analyze images of glioblastoma multiforme tumors.

Finally, we will need the following lemma, which is a trivial generalization of Lemma 3.4 of \cite{curry2022many}:
\begin{lem}\label{lem:general_tame_property}
    Suppose $S \subseteq \bb R^n$ is a definable set and $f: S \to \bb R$ is a definable function.
    \begin{enumerate}
        \item $S^f_t$ falls into finitely many homeomorphism types as $t$ ranges over $\bb R$.
        \item $\chi(S^f_t)$ takes finitely many values as $t$ ranges over $\bb R$.
        \item The function $t \mapsto \chi(S^f_t)$ has at most finitely many discontinuities; the function is constant between any two consecutive distinct discontinuities.
    \end{enumerate}
\end{lem}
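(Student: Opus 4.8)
The plan is to realize the entire family $\{S^f_t\}_{t\in\bb R}$ as the fibers of a single definable map and then apply the trivialization (Hardt) theorem. First I would observe that the set
\[
  A \coloneqq \{(x,t)\in S\times\bb R \ |\ f(x)\le t\}
\]
is definable: it is the image, under the definable axis-aligned projection $\bb R^{n+2}\to\bb R^{n+1}$, $(x,s,t)\mapsto(x,t)$, of the definable set $\{(x,s,t)\ |\ (x,s)\in\operatorname{graph}(f),\ s\le t\}$, and o-minimal structures are closed under such projections. Let $\pi\colon A\to\bb R$ be the restriction of the coordinate projection $(x,t)\mapsto t$; this is a continuous definable map whose fiber over $t$ is $\pi^{-1}(t)=S^f_t\times\{t\}$. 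Note that continuity of $f$ is nowhere needed here.

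Next I would invoke the trivialization theorem \citep[Chapter 9]{van1998tame} — valid since our o-minimal structure contains all semialgebraic sets by Condition~(6) of Definition~\ref{def:o-minimal} — to obtain a finite partition of $\bb R$ into definable sets $C_1,\dots,C_N$ such that over each $C_j$ the map $\pi$ is definably trivial: there is a definable homeomorphism $\pi^{-1}(C_j)\cong C_j\times F_j$ commuting with the projections to $C_j$, for some definable set $F_j$. By Condition~(5) of Definition~\ref{def:o-minimal} each $C_j$ is a point or an open interval, so $N<\infty$, and restricting the trivialization to a single parameter $t\in C_j$ yields a definable homeomorphism $S^f_t\cong F_j$. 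This proves (1), since every $S^f_t$ is definably homeomorphic to one of the finitely many $F_j$.

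For (2) and (3), recall from the discussion preceding the lemma that $\chi(\cdot)$ is invariant under definable homeomorphisms, so the function $g(t)\coloneqq\chi(S^f_t)$ equals the constant $\chi(F_j)$ on each $C_j$; this immediately gives (2). For (3), each $C_j$ that is an open interval is open in $\bb R$, so $g$ is locally constant — hence continuous — at every point of such a $C_j$; consequently the discontinuity set of $g$ is contained in the finite set of those $C_j$ that are singletons, and is therefore finite. Finally, if $d<d'$ are two consecutive discontinuities of $g$ (allowing $d=-\infty$ or $d'=+\infty$), then $g$ is continuous on the connected set $(d,d')$ and locally constant there — it is constant on each open-interval piece, and at any singleton piece $p\in(d,d')$ continuity of $g$ at $p$ together with local constancy on the punctured neighborhood of $p$ forces local constancy at $p$ — and a locally constant function on a connected set is constant, so $g$ is constant on $(d,d')$.

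There is no serious obstacle here: the lemma is essentially a bookkeeping consequence of Hardt trivialization, exactly as in Lemma~3.4 of \cite{curry2022many}. The only points requiring care are (i) verifying that $A$ and $\pi$ are genuinely definable so that the trivialization theorem applies, and (ii) using that the theorem produces a \emph{definable} homeomorphism, together with the fact that $\chi$ is a definable-homeomorphism (not merely homeomorphism) invariant, which is precisely what lets one transport the constancy of $\chi$ across each cell of the partition.
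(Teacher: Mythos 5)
Your proof is correct and follows essentially the same route as the paper: encode the family $\{S^f_t\}_t$ as the fibers of a single definable map (the paper's $X := \{(t,x): x\in S,\ f(x)-t\le 0\}$ is your $A$ with coordinates swapped) and apply the definable trivialization theorem to get a finite partition of $\bb R$ over which the fibers have constant definable-homeomorphism type, from which (1) and (2) follow immediately. The only small stylistic difference is in part (3): the paper observes that $t\mapsto\chi(S^f_t)$ is itself a definable function and re-invokes cell decomposition, whereas you read off the finiteness of the discontinuity set directly from the trivialization partition (after refining each $C_j$ into points and open intervals, which Condition~(5) permits); both are fine, and yours is arguably more self-contained.
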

\noindent Lemma~\ref{lem:general_tame_property} follows as a consequence of the ``trivialization theorem" \citep[][Chapter 9, Theorem 1.2]{van1998tame}. More specifically, it requires the following lemma from Chapter 9 of \cite{van1998tame}.

\begin{lem}[Rephrased from $\S$2 of Chapter 9 of \cite{van1998tame}]\label{lemma:: finite homeomorphism types}
Let $X \subseteq \mathbb{R}^{m+n}$ be a definable set. For any $t\in\mathbb{R}^m$, define $X_t :=\{x\in\mathbb{R}^n\vert\, (t,x)\in X\}$. Then, there exists a finite definable partition $\{A_i\}_{i=1}^M$ of $\mathbb{R}^m$, together with definable sets $\{F_i\}_{i=1}^M\subseteq\mathbb{R}^N$ for some $N$, such that $X_t$ is definably homeomorphic to $F_i$ for all $t \in A_i$.
\end{lem}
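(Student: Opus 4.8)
The statement to establish is Lemma~\ref{lemma:: finite homeomorphism types}, which is a reformulation of the trivialization theorem (\cite{van1998tame}, Chapter 9, Theorem 1.2) specialized to the projection $\mathbb{R}^{m+n}\to\mathbb{R}^m$. The plan is to deduce it directly from that trivialization theorem and the existence of finite definable partitions on which a map is ``definably trivial.'' First I would set up notation: let $\pi:\mathbb{R}^{m+n}\to\mathbb{R}^m$ be the coordinate projection onto the first $m$ coordinates, so that for $X\subseteq\mathbb{R}^{m+n}$ definable, the fiber $\pi^{-1}(t)\cap X$ is precisely $\{t\}\times X_t$ with $X_t$ as defined in the statement. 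The restriction $\pi|_X:X\to\mathbb{R}^m$ is a definable continuous map, so the trivialization theorem applies to it.

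The core step is to invoke the trivialization theorem in the following form: for the definable map $\pi|_X:X\to\mathbb{R}^m$, there is a finite partition of $\mathbb{R}^m$ into definable sets $A_1,\dots,A_M$ such that over each $A_i$ the map $\pi|_X$ is definably trivial, i.e.\ there is a definable set $F_i$ and a definable homeomorphism $\lambda_i:\pi|_X^{-1}(A_i)\xrightarrow{\ \sim\ }A_i\times F_i$ commuting with the projections to $A_i$. Fixing any $t\in A_i$ and restricting $\lambda_i$ to the fiber over $t$ yields a definable homeomorphism $\{t\}\times X_t\xrightarrow{\ \sim\ }\{t\}\times F_i$, hence $X_t$ is definably homeomorphic to $F_i$. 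Since the $A_i$ partition $\mathbb{R}^m$, this gives the claim for every $t\in\mathbb{R}^m$. I would also remark that the $F_i$ can be taken to sit in a common $\mathbb{R}^N$ (e.g.\ take $N$ to be the maximum of the ambient dimensions arising, and embed), which matches the wording ``$\{F_i\}_{i=1}^M\subseteq\mathbb{R}^N$ for some $N$''; alternatively one can simply let each $F_i$ live in its own $\mathbb{R}^{N_i}$ and note this is harmless.

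The main obstacle is not mathematical depth but bookkeeping: one must be careful that the trivialization theorem as stated in \cite{van1998tame} is phrased for a definable continuous surjection (or, in the general version, for a definable map together with a finite family of definable subsets of the source that are respected by the trivialization), and that the hypotheses there -- which require Conditions (1)--(6) of Definition~\ref{def:o-minimal}, in particular that the o-minimal structure expands the real field -- are exactly the ones we have assumed. I would explicitly note that $\pi|_X$ need not be surjective onto $\mathbb{R}^m$; for $t\notin\pi(X)$ the fiber $X_t$ is empty, and one includes the definable set $\mathbb{R}^m\setminus\pi(X)$ (definable by Condition (3), closure under projection, and Boolean operations) as one of the partition pieces $A_i$ with the corresponding $F_i=\varnothing$. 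With that caveat handled, the deduction is immediate. Finally I would close by pointing out that, since the partition is finite, this is precisely the input needed for Lemma~\ref{lem:general_tame_property}: there are only finitely many definable-homeomorphism types among $\{X_t\}_{t\in\mathbb{R}^m}$, and therefore (taking $m=1$ and $X$ the graph-sliced incidence set $\{(t,x): x\in S,\ f(x)\le t\}$) only finitely many homeomorphism types and finitely many Euler characteristic values among the sublevel sets.
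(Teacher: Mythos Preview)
Your proposal is correct. The paper does not actually give its own proof of this lemma: it simply states it as a rephrasing of \S 2 of Chapter 9 of \cite{van1998tame}, which in turn is derived from the trivialization theorem (Theorem 1.2 there) exactly along the lines you describe---partition the base so that $\pi|_X$ is definably trivial over each piece $A_i$, and read off the fiber type $F_i$. Your handling of the non-surjectivity issue (adding $\mathbb{R}^m\setminus\pi(X)$ with $F_i=\varnothing$) and of embedding the $F_i$ into a common $\mathbb{R}^N$ are appropriate bookkeeping details that make the derivation complete.
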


\begin{proof}[Proof of Lemma~\ref{lem:general_tame_property}]
We implement Lemma \ref{lemma:: finite homeomorphism types} by defining the following: (i) $m := 1$ and (ii) $X := \left\{(t,x)\in \mathbb{R}\times\mathbb{R}^n\,\vert\, x\in S \text{ and } f(x) - t \le 0\right\}$. Note that $X$ is definable. Lemma \ref{lemma:: finite homeomorphism types} implies that $X_t=S_t^f$ falls into at most $M$ homeomorphism types as $t$ ranges over $\bb R$. Since the Euler characteristic is a definable homeomorphism invariant, clearly $\chi(S^f_t)$ can only take at most $M$ values as $t$ runs through $\bb R$.

The discussion above shows that the function $t \mapsto \chi(S_t^f)$ is a definable function. The ``cell decomposition theorem" \citep[][Chapter 3, Theorem 2.11]{van1998tame} implies that $\bb R$ has a finite partition into cells such that the function $t \mapsto \chi(S_t^f)$ is continuous on each cell. Therefore, the function $t \mapsto \chi(S_t^f)$ has at most finitely many discontinuities, and the function is constant between any two consecutive distinct discontinuities. 
\end{proof}

\subsection{Extending the ECT to Real Definable Functions}

One limitation of the ECT is that it can only take in integer-valued functions and does not apply to most real-valued functions. Several papers have discussed possible generalizations of the ECT, and here we briefly outline two approaches.

\subsubsection{The Lifted and Super Lifted Euler Characteristic Transform}

Motivated by Gaussian random fields, \cite{kirveslahti2023representing} introduced the lifted and super lifted Euler characteristic transform (LECT and SELECT) to capture the Euler characteristics of level sets and superlevel sets of a definable function.

\begin{defn}\label{def:lect_select}
    Let $X \subseteq \bb R^n$ be a definable set and $g: X \to \bb R$ be a definable function, then the \textit{lifted Euler characteristic transform} of $g$ is a function $(v, t, s) \mapsto \LECT(g)(v, t, s)$ on $\bb S^{n-1} \times \bb R \times \bb R$ defined as follows
    \[\LECT(g)(v, t, s) \coloneqq \int_X \mathbbm{1}_{\{g = s\}} \cdot \mathbbm{1}_{X^v_t} \,d\chi.  \]
    Similarly, the \textit{super lifted Euler characteristic transform} of $g$ is a function $(v, t, s) \mapsto \SELECT(g)(v, t, s)$ on $\bb S^{n-1} \times \bb R \times \bb R$ defined as follows
    \[\SELECT(g)(v, t, s) \coloneqq \int_X \mathbbm{1}_{\{g \geq s\}} \cdot \mathbbm{1}_{X^v_t} \,d\chi.  \]
\end{defn}
\noindent Regarding the LECT and SELECT, we have the following lemma that is similar to the case of Lemma~\ref{lem:general_tame_property}.
\begin{lem}\label{lem:SELECT_LECT_tame_property}
  Suppose $S \subseteq \bb R^n$ is a definable set and $g: S \to \bb R$ is a definable function. Then
  \begin{enumerate}
      \item $\LECT(g)(v, t, s)$ and $\SELECT(g)(v, t, s)$ take only finitely many values as $(v, t, s)$ runs through $\bb S^{n-1} \times \bb R \times \bb R$.
      \item The functions $t \to \LECT(g)(v, t, s)$ and $t \to \SELECT(g)(v, t, s)$ have at most finitely many discontinuities.
  \end{enumerate}
\end{lem}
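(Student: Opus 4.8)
The plan is to reduce both assertions to the ``finite homeomorphism types'' statement (Lemma~\ref{lemma:: finite homeomorphism types}), mimicking the proof of Lemma~\ref{lem:general_tame_property} but now treating the triple $(v,t,s)$ jointly as the parameter. First I would rewrite the two transforms as Euler characteristics of definable sets: since the Euler integral of an indicator function is the Euler characteristic of the underlying definable set, and $\mathbbm{1}_{\{g=s\}}\cdot\mathbbm{1}_{S^v_t}=\mathbbm{1}_{\{g=s\}\cap S^v_t}$ (similarly for $\{g\ge s\}$), we have
\[
\LECT(g)(v,t,s)=\chi\big(\{x\in S:\,g(x)=s\}\cap S^v_t\big),\qquad
\SELECT(g)(v,t,s)=\chi\big(\{x\in S:\,g(x)\ge s\}\cap S^v_t\big).
\]
Here $\{(x,s)\in S\times\bb R:\,g(x)=s\}$ is essentially the graph of $g$ and $\{(x,s)\in S\times\bb R:\,g(x)\ge s\}$ is obtained from the graph and the order relation by an axis-aligned projection, so both are definable; combined with the fact that $\bb S^{n-1}$ is definable, this makes the two families
\[
\cal X_{=}:=\{(v,t,s,x):\,x\in S,\ g(x)=s,\ x\cdot v\le t\},\qquad
\cal X_{\ge}:=\{(v,t,s,x):\,x\in S,\ g(x)\ge s,\ x\cdot v\le t\}
\]
definable subsets of $\bb R^{n+2}\times\bb R^n$, whose fiber over the parameter $(v,t,s)\in\bb S^{n-1}\times\bb R\times\bb R$ is exactly the set inside $\chi(\cdot)$ above.

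Next I would apply Lemma~\ref{lemma:: finite homeomorphism types} with $m=n+2$ to each of $\cal X_{=}$ and $\cal X_{\ge}$, obtaining a finite definable partition $\{A_i\}_{i=1}^{M}$ of $\bb R^{n+2}$ and definable sets $\{F_i\}_{i=1}^{M}$ such that every fiber over a point of $A_i$ is definably homeomorphic to $F_i$. Since $\chi(\cdot)$ is a definable homeomorphism invariant, $\LECT(g)(v,t,s)=\chi(F_i)$ whenever $(v,t,s)\in A_i\cap(\bb S^{n-1}\times\bb R\times\bb R)$; this yields part~(1) (at most $M$ values), and at the same time shows that $(v,t,s)\mapsto\LECT(g)(v,t,s)$ is a definable function, being piecewise constant on the finite definable partition $\{A_i\}$ (each of its fibers is a finite union of the $A_i$). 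For part~(2), fix $(v,s)$ and restrict: $t\mapsto\LECT(g)(v,t,s)$ is then a definable function $\bb R\to\bb Z$, so by the cell decomposition theorem in dimension one, $\bb R$ splits into finitely many points and open intervals on each of which this function is continuous, hence — being integer valued — constant on each interval; therefore it has at most finitely many discontinuities. The argument for $\SELECT$ is identical, using $\cal X_{\ge}$ in place of $\cal X_{=}$.

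The steps are all routine, so I do not anticipate a genuine obstacle; the only points needing care are bookkeeping ones: verifying that $\{g\ge s\}$ together with the sphere $\bb S^{n-1}$ contribute definable data so that $\cal X_{=}$ and $\cal X_{\ge}$ are honestly definable, and checking that the fiber of these families over $(v,t,s)$ is literally the set whose Euler characteristic appears in $\LECT$ (resp.\ $\SELECT$). Once those are in place, the proof is a direct transcription of the proof of Lemma~\ref{lem:general_tame_property} with the scalar parameter $t$ replaced by the vector parameter $(v,t,s)\in\bb R^{n+2}$.
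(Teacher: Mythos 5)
Your proposal is correct and realizes precisely what the paper gestures at: the paper omits the proof, saying only that it is ``similar to that of Lemma~\ref{lem:general_tame_property},'' and your argument is exactly that reduction—build the definable family over the joint parameter $(v,t,s)\in\mathbb{R}^{n+2}$, invoke Lemma~\ref{lemma:: finite homeomorphism types} to bound the homeomorphism types (hence Euler characteristics) of the fibers for part~(1), and then use cell decomposition on the restricted definable function $t\mapsto\chi(\cdot)$ for part~(2). The only small simplification worth noting is that part~(2) does not actually require the joint parametrization: once $(v,s)$ is fixed, $\LECT(g)(v,\cdot,s)=\chi\bigl((S')^{\varphi_v}_t\bigr)$ with $S'=\{x\in S: g(x)=s\}$, so Lemma~\ref{lem:general_tame_property}(3) applies directly, but your route through the joint family is equally valid.
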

\noindent Lemma \ref{lem:SELECT_LECT_tame_property} is adapted from \cite{meng2023Inference}. The proof is similar to that of Lemma~\ref{lem:general_tame_property} and is omitted here.

\subsubsection{The Euler-Radon Transform}

\cite{meng2023Inference} introduced the Euler-Radon transform (ERT) based on the framework of Euler integration for real definable functions proposed by \cite{baryshnikov2010euler}. Similar to approximating real integrals using a Riemann sum, the idea proposed by \cite{baryshnikov2010euler} was to integrate real definable functions with approximations by the floor and ceiling functions (denoted by $\lfloor \cdot \rfloor$ and $\lceil \cdot \rceil$, respectively). More precisely, given a compactly supported definable function $g: X \to \bb R$, we adopt the following the Euler integral of $g$ 
\begin{align}\label{eq: Euler integrals, averaged version}
\int_X g \,[d\chi] \coloneqq \lim_{n \to \infty} \left( \frac{1}{2n} \int_X \lfloor ng \rfloor + \lceil ng \rceil \,d\chi \right).
\end{align}
\cite{baryshnikov2010euler} showed that the limit in Equation~\eqref{eq: Euler integrals, averaged version} exists and is well-defined. \cite{meng2023Inference} defined the Euler-Radon transform using the functional $\int (\cdot) \,[d\chi]$ as follows
\begin{defn}\label{def:ert}
    Let $X \subseteq \bb R^n$ be a definable set and $g: X \to \bb R$ be a compactly supported definable function. The \textit{Euler-Radon transform} of $g$ is a function $(v, t) \mapsto \ERT(g)(v, t)$ on $\bb S^{n-1} \times \bb R$ defined as follows
    \[\ERT(g)(v, t) \coloneqq \int_X g \cdot \mathbbm{1}_{X^v_t} [d\chi].\]
\end{defn}
\noindent Note that when $g$ is a constructible function, $\ERT(g) = \ECT(g)$, i.e., the ERT is an extension of the ECT. Specifically, in this case, $\lfloor n\cdot(g \cdot \mathbbm{1}_{X^v_t}) \rfloor =\lceil n\cdot(g \cdot \mathbbm{1}_{X^v_t}) \rceil = n\cdot(g \cdot \mathbbm{1}_{X^v_t})$, and the integral simply becomes
\begin{align}\label{eq: the ECT is a special case of the ERT}
    \ERT(g)(v, t) \coloneqq \lim_{n \to \infty} \left\{ \frac{1}{2n} \int_X 2n\cdot(g \cdot \mathbbm{1}_{X^v_t}) \,d\chi \right\} = \int_X g \cdot \mathbbm{1}_{X^v_t} \,d\chi = \operatorname{ECT}(g)(v, t).
\end{align}

\section{Euler Characteristic of Definable Sublevel Sets}\label{section: Right continuity}

In this section, we will prove the right continuity of $t \mapsto \chi(S_t^f)$ for all definable sets $S$ and definable functions $f: S \to \bb R$ (in Section \ref{section: A General Right Continuity Result}). As a consequence, we will also prove the right continuity of the ECT (in Section \ref{section: Right Continuity of the Euler Characteristic Transform}) and subsequently the right continuity of the ERT (in Section \ref{section: Right Continuity of the Euler-Radon Transform}). We will also discuss a ``middle continuity" property of the Euler characteristic as a corollary (in Section~\ref{section: Middle Continuity of the Euler Characteristic}).

\subsection{A General Right Continuity Result}\label{section: A General Right Continuity Result}

Motivated by Theorem~\ref{thm:morse_result}, we have the following result.

\begin{thm}\label{thm:general_right_continuous}
    Let $S \subseteq \bb R^n$ be a definable set and $f: S \to \bb R$ be a definable function.
    \begin{enumerate}
        \item \label{sub:right_continuous} The function $t \mapsto \chi(S_t^f)$ is right-continuous.
        \item \label{sub:negative} There exists $C \in \bb R$ such that $\chi(S^f_t) = 0$ for all $t \leq C$.
    \end{enumerate}
\end{thm}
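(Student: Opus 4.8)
The plan is to reduce Theorem~\ref{thm:general_right_continuous} to the structural facts collected in Lemma~\ref{lem:general_tame_property}, together with a careful analysis of the behavior of $\chi(S^f_t)$ as $t$ approaches a point from the right. Write $F(t) := \chi(S^f_t)$. By part (3) of Lemma~\ref{lem:general_tame_property}, $F$ is a definable, integer-valued function of $t$ with only finitely many discontinuities, and it is constant on each open interval between consecutive discontinuities. So fix $t_0 \in \bb R$; I want to show $\lim_{t \to t_0^+} F(t) = F(t_0)$. Because of the local constancy, there is an $\varepsilon > 0$ such that $F$ is constant, say equal to some value $c$, on the interval $(t_0, t_0 + \varepsilon)$, and right continuity at $t_0$ amounts to proving $c = F(t_0)$.

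The key step is therefore: \emph{the sublevel set $S^f_{t_0}$ has the same Euler characteristic as $S^f_{t}$ for $t$ slightly larger than $t_0$.} The natural approach is to observe that $S^f_{t_0} = \bigcap_{t > t_0} S^f_t = \bigcap_{n} S^f_{t_0 + 1/n}$, a decreasing intersection of definable sets. For $t \in (t_0, t_0 + \varepsilon)$ the sets $S^f_t$ all lie in a single definable homeomorphism type (shrinking $\varepsilon$ using part (1) of Lemma~\ref{lem:general_tame_property}), so it suffices to understand the ``limit" set $S^f_{t_0}$ inside this family. Here I would invoke the trivialization theorem / Hardt's theorem directly: applying Lemma~\ref{lemma:: finite homeomorphism types} to $X = \{(t,x) : x \in S,\ f(x) \le t\}$, we get a definable partition of $\bb R$ so that over the piece containing an interval with right endpoint approached, the family $\{S^f_t\}$ is definably trivial, i.e. definably homeomorphic to a product $A_i \times F_i$ compatibly with projection to $t$. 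One then analyzes what happens at the boundary point $t_0$ of such a piece. The cleanest way to extract $\chi(S^f_{t_0}) = c$ is to use additivity of $\chi$ over the definable partition $S^f_{t_0} = S^f_{t_0 - \eta} \sqcup (S^f_{t_0} \setminus S^f_{t_0-\eta})$ versus $S^f_{t} = S^f_{t_0-\eta} \sqcup (S^f_{t}\setminus S^f_{t_0-\eta})$ for small $t > t_0$, reducing the claim to showing $\chi$ of the ``thin shell" $\{x \in S : t_0 - \eta < f(x) \le t_0\}$ equals $\chi$ of $\{x \in S: t_0 - \eta < f(x) \le t\}$; and then pushing $\eta \to 0$, or directly arguing via the trivialization over $(t_0-\eta, t_0]$ that $\{f \le t_0\}$ and $\{f < t_0 + \delta\}$-type sets match up. I anticipate the main obstacle is exactly this point: making rigorous that passing to the sublevel set at the closed endpoint $t_0$ does not change $\chi$, since the trivialization is only guaranteed on the strata, not across the stratum boundary, and one must rule out a ``jump" coming from the fiber over $t_0$ being attached in a way that changes $\chi$. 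The resolution should come from the observation that $\{f < t_0\} = \bigcup_{n} S^f_{t_0 - 1/n}$ and $S^f_{t_0} = \bigcap_n S^f_{t_0+1/n}$ differ from $S^f_{t_0}$ only in a definable set of lower-dimensional ``critical-value" behavior, combined with the fact (from cell decomposition applied to $X$) that for all but finitely many $t_0$ the function is actually continuous, so only the finitely many exceptional $t_0$ need the hands-on argument — and there the local constancy on $(t_0, t_0+\varepsilon)$ plus finiteness lets us compare $F(t_0)$ directly to the single value $c$ by a limiting/filtration argument.

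For part (2), the argument is short: since $f: S \to \bb R$ is definable, its image $f(S)$ is a definable subset of $\bb R$, hence by Condition~(\ref{cond:o1-definable}) of Definition~\ref{def:o-minimal} a finite union of points and open intervals; in particular $f(S)$ is bounded below unless it is empty or contains a ray $(-\infty, a)$. If $S = \varnothing$ then $S^f_t = \varnothing$ and $\chi(S^f_t) = 0$ for all $t$, so take any $C$. Otherwise, let $C := \inf f(S)$ if this infimum is finite: for $t < C$ we have $S^f_t = \varnothing$, so $\chi(S^f_t) = 0$. If instead $f(S)$ is unbounded below, I would still use Lemma~\ref{lem:general_tame_property}(2)--(3): $F(t) = \chi(S^f_t)$ takes finitely many values and is eventually constant as $t \to -\infty$, say equal to $c_-$ on $(-\infty, C)$ for some $C$; it remains to identify $c_- = 0$. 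For this, note $\bigcap_t S^f_t = \varnothing$ (any $x$ with $f(x) = s$ is not in $S^f_t$ once $t < s$), and on $(-\infty, C)$ the family $S^f_t$ is definably trivial by the trivialization theorem, hence homeomorphic to $A \times F_0$ for a single fiber $F_0$; since the inverse limit is empty but each fiber is homeomorphic to $F_0$, one deduces $F_0 = \varnothing$ (a nonempty closed-type definable fiber, stable under the trivialization, could not have empty intersection along an unbounded-below ray), whence $\chi(S^f_t) = \chi(\varnothing) = 0$ for $t < C$. Either way, part (2) follows, and I'd expect the write-up to simply handle the generic case $C = \inf f(S)$ since that is the situation of interest in the applications.
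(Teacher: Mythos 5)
Your proposal correctly identifies the relevant structural tools (cell decomposition, Hardt trivialization, and the piecewise-constancy from Lemma~\ref{lem:general_tame_property}(3)), but it misses the single observation that makes the proof work, and it contains a genuine error in part~(\ref{sub:negative}).

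For part~(\ref{sub:right_continuous}), your plan compares $S^f_{t_0}$ with $S^f_{t_0-\eta}$ and tries to control the ``thin shell'' $\{t_0-\eta<f\le t_0\}$, and you explicitly flag the resulting difficulty: the closed endpoint $t_0$ sits on a stratum boundary, so the trivialization over $(t_0-\eta, t_0]$ you invoke need not exist, and you do not resolve this. The paper's proof sidesteps the boundary entirely by working \emph{only} to the right of $t_0$: by finite additivity, $\chi(S^f_{t_0+\epsilon})-\chi(S^f_{t_0})=\chi(S^f_{t_0+\epsilon}\setminus S^f_{t_0})=\chi\bigl(f^{-1}((t_0,t_0+\epsilon])\bigr)$, and for $\epsilon$ small the half-open interval $(t_0,t_0+\epsilon]$ lies inside a single stratum $A_i$, so $f^{-1}((t_0,t_0+\epsilon])\cong (t_0,t_0+\epsilon]\times B_i$ and $\chi\bigl((t_0,t_0+\epsilon]\bigr)\cdot\chi(B_i)=0\cdot\chi(B_i)=0$. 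The Euler-characteristic identity $\chi((t_0,t_0+\epsilon])=0$ is the key step you never reach; without it, all your decomposition-and-limit arguments are inconclusive. Note also that Hardt's trivialization theorem requires a \emph{continuous} definable map; the paper first partitions $S$ by cell decomposition so that $f$ restricts to a continuous function on each piece, and reduces to that case by additivity of $\chi$. Your proposal applies trivialization directly to a possibly discontinuous $f$, which is not valid.

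For part~(\ref{sub:negative}), the case where $f(S)$ is unbounded below contains a false step: from ``each $S^f_t$ is homeomorphic to a fixed $F_0$'' and ``$\bigcap_t S^f_t=\varnothing$'' you deduce $F_0=\varnothing$. This does not follow. Take $S=\bb R$ and $f(x)=x$: then $S^f_t=(-\infty,t]$ is never empty, the intersection over $t$ is empty, and the trivialization fiber $B_i$ is a point, not the empty set. (You also conflate $S^f_t$, which is $f^{-1}((-\infty,t])$, with the whole trivialized preimage $f^{-1}(A_i)\cong A_i\times B_i$.) The correct mechanism is again the Euler-characteristic product formula: $S^f_t\cong(-\infty,t]\times B_i$ and $\chi((-\infty,t])=0$, so $\chi(S^f_t)=0$ regardless of whether $B_i$ is empty. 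Your conclusion happens to be right, but the stated reasoning is invalid and would not survive scrutiny.
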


\begin{proof}
By the ``cell decomposition theorem" \citep[][Chapter 3, Theorem 2.11]{van1998tame}, there exists a disjoint partition of $S$ into definable subsets $S_1, ..., S_N$ such that the restriction of $f$ to each $S_i$ becomes a continuous definable function. Since the Euler characteristic is finitely additive, we have that
\[\chi(S_t^f) = \sum_{j = 1}^N \chi\left((S_j)_t^f \right).\]
For Part (\ref{sub:right_continuous}), if the function $t \mapsto \chi((S_j)_t^f)$ is right-continuous for each $j$, then the function $t \mapsto \chi(S_t^f)$ would also be right-continuous. Similarly for Part (\ref{sub:negative}), if there exists some $C_j < 0$ associated to each $S_j$ such that $\chi((S_j)_t^f) = 0$ for all $t \leq C_j$, we could take $C = \max \{C_1, ..., C_N\}$ and use the finite additivity of Euler characteristics for the general case. Both arguments reduce to the case when $f$ is continuous and definable. Thus, without loss of generality, we will hereafter assume that $f$ is a continuous definable function.

We will first prove Part (\ref{sub:right_continuous}). Since $f$ is a continuous definable function, by the ``trivialization theorem" \citep[][Chapter 9, Theorem 1.2]{van1998tame}, there is a finite partition of $\bb R$ into definable subsets $A_1, ..., A_M$; for each $A_i$, there exists some definable set $B_i$ and a definable homeomorphism $h_i: f^{-1}(A_i) \to A_i \times B_i$ such that the following diagram commutes
\begin{equation}
\begin{tikzcd}\label{definable_trivialization}
	{f^{-1}(A_i)} && {A_i \times B_i} \\
	& {A_i}
	\arrow["{h_i}", from=1-1, to=1-3]
	\arrow["\pi", from=1-3, to=2-2]
	\arrow["f", from=1-1, to=2-2]
\end{tikzcd}
\end{equation}
where $\pi:A_i \times B_i\rightarrow A_i$ denotes the standard projection. We say that $f$ is ``definably trivial" over each $A_i$ in this case.

Lemma \ref{lem:general_tame_property} indicates that $t \mapsto \chi(S^f_t)$ is piecewise constant with at most finitely many discontinuities. It suffices to show the right continuity of $t \mapsto \chi(S^f_t)$ at each discontinuity. Suppose $t \in \bb R$ is an aforementioned discontinuity. It suffices to show the following for all sufficiently small $\epsilon > 0$,
\begin{align}\label{eq: equation to prove for RC}
    \begin{aligned}
        0 &= \chi(S^{f}_{t + \epsilon}) - \chi(S^{f}_{t})\\
    &= \chi(S^{f}_{t + \epsilon} \setminus S^{f}_{t}),
    \end{aligned}
\end{align}
where the last equality follows from the finite additivity of $\chi(\cdot)$.

We observe that $f(S^{f}_{t + \epsilon} \setminus S^{f}_{t}) \subseteq (t, t + \epsilon]$ and $f^{-1}((t, t + \epsilon]) = S^{f}_{t + \epsilon} \setminus S^{f}_{t}$. Since definable subsets of $\bb R$ are precisely finite unions of points and open intervals, $(t, t + \epsilon]$ must be contained in exactly one of the $A_1, ..., A_M$ for sufficiently small $\epsilon > 0$. Without loss of generality, we assume that $(t, t+\epsilon]$ is contained in $A_1$. The diagram in Equation~\eqref{definable_trivialization} induces the following commutative diagram 
\begin{equation}
\begin{tikzcd}\label{trivialization_t_epsilon}
	{f^{-1}((t, t + \epsilon])} && {(t, t + \epsilon] \times B_1} \\
	& {(t, t + \epsilon]}
	\arrow["{\cong}", from=1-1, to=1-3]
	\arrow["\pi", from=1-3, to=2-2]
	\arrow["f"', from=1-1, to=2-2]
\end{tikzcd}
\end{equation}
where $f^{-1}((t, t + \epsilon])$ is definably homeomorphic to $(t, t+ \epsilon] \times B_1$. Since the Euler characteristic is a definable homeomorphism invariant, we have that
\begin{align*}
    \chi(S^{f}_{t + \epsilon} \setminus S^{f}_{t}) &= \chi\left(f^{-1}((t, t + \epsilon])\right)\\
    &= \chi\left((t, t + \epsilon] \times B_1 \right)\\
    &= \chi\left((t, t+\epsilon]\right) \chi(B_1) \\
    &= 0, 
\end{align*}
where the last equality follows from $\chi((t, t+\epsilon])=\chi((t, t+\epsilon))+\chi(\{t+\epsilon\})=0$.
Thus, Equation~\eqref{eq: equation to prove for RC} holds for sufficiently small $\epsilon$. We remark here that Equation (\ref{trivialization_t_epsilon}) also implies that $f(S^{f}_{t + \epsilon} \setminus S^{f}_{t})$ is empty if $B_1$ is empty and is $(t, t+\epsilon]$ if $B_1$ is non-empty.

Finally, we will prove Part (\ref{sub:negative}). Since $A_1, ..., A_M$ are finite unions of points and intervals that partition $\bb R$, there exists some $C \in \bb R$ such that for all $t \leq C$ the interval $(-\infty, t]$ is contained in exactly one of the $A_i$'s. By the same argument as above, it follows that $S^f_t$ is definably homeomorphic to $(-\infty, t] \times B_i$ for some definable set $B_i$ and for all $t < C$. Hence, by the multiplicativity of Euler characteristics,
    \[\chi(S^f_t) = \chi((-\infty, t] \times B_i)) = \chi((-\infty, t]) \times \chi(B_i) = 0. \]
The proof is completed.
\end{proof}

\begin{rem}\label{rem: known_result_analytic}
After posting the first version of this manuscript, we learned the following: the right continuity of $t\mapsto\chi(S_t^f)$ is already known when the set $S$ is an element of the o-minimal structure of globally subanalytic subsets of $\bb R^n$ and $f$ is a continuous subanalytic function. This is a consequence of Theorem 1.11 of \cite{Kashiwara_Schapira_2018} and Theorem 4.17 of \cite{Schapira_2023}; it is also a consequence of Proposition 4.18 of \cite{Schapira_2023} and Proposition 7.5 of \cite{Lebovici_2022}. Compared to the existing results, our contribution lies in the universal applicability of Theorem~\ref{thm:general_right_continuous}---our Theorem~\ref{thm:general_right_continuous}(\ref{sub:right_continuous}) applies to any sets $S$ in any o-minimal structures satisfying the axioms in Definition~\ref{def:o-minimal}. Theorem~\ref{thm:general_right_continuous} opens doors to explore o-minimal structures beyond the globally subanalytic realm. Notably, many interesting o-minimal structures do not fit into the globally subanalytic universe, including the real exponential field in \cite{Wilkies1996} and the field of real numbers with multisummable real power series in \cite{Dries_Speissegger_2000}. In fact, \cite{InfiniteOMinimal} constructed an infinite family of pairwise incompatible o-minimal structures that expands the o-minimal structure of globally subanalytic subsets. In each case, our result also extends to all definable functions in their respective o-minimal structure, which includes definable functions whose graphs are not subanalytic.
\end{rem}

One might wonder whether the discussion in the proof of Theorem \ref{thm:general_right_continuous} can be extended to any other topological invariants. As a remark, we observe that the proof of Theorem~\ref{thm:general_right_continuous} works for any real-valued homeomorphism invariant $\psi$ on definable sets provided that $\psi$ is finitely additive and $\psi((0, 1] \times B) = 0$ for any definable sets $B$. However, the following proposition shows that $\psi$ has to be the Euler characteristics multiplied by some constant.
\begin{prop}\label{prop:trivialization}
Suppose $\psi$ is a real-valued function of definable subsets of $\bb R^n$ for $n = 1, 2, ...$ such that
\begin{enumerate}
    \item $\psi$ is a homeomorphism invariant.
    \item For $A, B$ definable and disjoint, $\psi(A \cup B) = \psi(A) + \psi(B)$
    \item $\psi( (0, 1] \times (0,1)^n) = 0$ for all $n \geq 0$.
\end{enumerate}
Then $\psi$ is equal to some constant times the Euler characteristic.
\end{prop}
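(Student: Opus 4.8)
The plan is to reduce the problem to cells and then to the single one-parameter family of open boxes $(0,1)^{d}$, $d\ge 0$. By the cell decomposition theorem \citep[][Chapter 3, Theorem 2.11]{van1998tame}, every definable set $S$ is a finite disjoint union of cells $C_1,\dots,C_N$, so finite additivity gives $\psi(S)=\sum_{i=1}^{N}\psi(C_i)$; the same identity with $\chi$ in place of $\psi$ is exactly the definition $\chi(S)=\sum_i(-1)^{\dim C_i}$ from Equation~\eqref{eq: def of EC}. The next ingredient I would record is that an arbitrary cell $C$ of dimension $d$ is definably homeomorphic to the open box $(0,1)^{d}$, with the convention $(0,1)^{0}=\{\mathrm{pt}\}$. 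This is an induction on the ambient dimension straight from the recursive definition of cells: a graph cell is (definably) homeomorphic to its base via projection, and a band cell $(f,g)_D$, $(f,+\infty)_D$, $(-\infty,g)_D$, or $D\times\bb R$ is homeomorphic to $D\times(0,1)$ by an explicit fiberwise rescaling (for the unbounded bands one uses $\bb R\cong(0,1)$ definably, e.g.\ via $t\mapsto t-1/t$ on $(0,\infty)$ composed with $(0,1)\cong(0,\infty)$, which is legitimate since the structure contains the semialgebraic sets). Consequently, writing $a_d:=\psi\big((0,1)^{d}\big)$, homeomorphism invariance gives $\psi(C)=a_{\dim C}$ for every cell $C$, and hence $\psi(S)=\sum_{i=1}^{N}a_{\dim C_i}$ for every definable $S$.

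It then remains to pin down the numbers $a_d$. First, additivity applied to $\emptyset=\emptyset\sqcup\emptyset$ forces $\psi(\emptyset)=0$. Now fix $d\ge0$ and split
\[
(0,1]\times(0,1)^{d}=\big((0,1)\times(0,1)^{d}\big)\ \sqcup\ \big(\{1\}\times(0,1)^{d}\big)
\]
into disjoint definable pieces. The first piece is $(0,1)^{d+1}$ and the second is homeomorphic to $(0,1)^{d}$, so additivity together with hypothesis (3) gives $0=\psi\big((0,1]\times(0,1)^{d}\big)=a_{d+1}+a_d$. Thus $a_{d+1}=-a_d$, and by induction $a_d=(-1)^{d}a_0$ for all $d\ge0$, where $a_0=\psi(\{\mathrm{pt}\})$. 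Substituting into the formula from the previous paragraph, for any definable $S$ with cell decomposition $S=\bigsqcup_{i=1}^{N}C_i$ we obtain
\[
\psi(S)=\sum_{i=1}^{N}(-1)^{\dim C_i}\,a_0=a_0\cdot\chi(S),
\]
so $\psi=c\cdot\chi$ with $c=\psi(\{\mathrm{pt}\})$. (That the right-hand side is independent of the chosen cell decomposition is automatic, since $\chi$ is.)

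The only genuinely nontrivial step is the claim that every $d$-dimensional cell is (definably) homeomorphic to $(0,1)^{d}$—equivalently, that $\psi$ restricted to cells factors through the dimension; the rest is bookkeeping with finite additivity and the identity $\chi((0,1)^{d})=(-1)^{d}$. If one wishes to avoid even that, one can instead invoke the standard fact, implicit in the proof of the cell decomposition theorem in \citep[][Chapter 3]{van1998tame}, that cells are definably homeomorphic to open boxes, and use (as in the proof of Theorem~\ref{thm:general_right_continuous}) that $\psi$ is a definable-homeomorphism invariant. I do not expect further obstacles: the argument is short and self-contained modulo the cited o-minimal machinery.
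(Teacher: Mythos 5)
Your proposal is correct and follows essentially the same route as the paper's proof: cell decomposition, identification of each $d$-cell with a fixed model space, the split of $(0,1]\times(0,1)^{d}$ into the open box and a face to derive the recurrence $a_{d+1}=-a_d$, and finite additivity to reassemble. The only cosmetic difference is that you take $(0,1)^d$ as the model for a $d$-dimensional cell, where the paper invokes the fact from van den Dries (Chapter 3, ``(2.7)'') that every cell is definably homeomorphic to $\bb R^{\dim\mathcal{C}}$; since $\bb R\cong(0,1)$ definably these are interchangeable, and your brief justification via the recursive cell construction is fine.
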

\begin{proof}
    For any point $x \in \mathbb{R}^n,$ the singleton $\{x\}$ is definable. By (1) in the assumption, $\psi(\{x\})$ is the same for any point $x.$ We denote $\psi(\{x\}) = \alpha.$ For any $n > 0$, we observe that \[0 = \psi((0,1] \times (0, 1)^n) = \psi((0,1)^{n+1}) + \psi(\{1\} \times (0,1)^n) = \psi(\bb R^{n+1}) + \psi(\bb R^n).\] Inductively, we have that $\psi(\bb R^n) = \alpha (-1)^n$ as \[\psi(\bb R^1) = \psi((0,1]) - \psi(\{1\}) = 0 - \alpha = -\alpha.\]
The ``cell decomposition theorem" \citep[][Chapter 3, Theorem 2.11]{van1998tame} implies that every definable set in $\mathbb{R}^n$ can be broken down into a finite disjoint union of cells. Furthermore, each cell $\mathcal{C}$ is definably homeomorphic to $\bb R^{\dim \mathcal{C}}$ \citep[][Chapter 3, ``(2.7)" therein]{van1998tame}. For any definable set $A$ with finite cell partition $C_1, ..., C_n$ given by the ``cell decomposition theorem", we have that
   \[\psi(A) = \psi\left(\bigsqcup_{i=1}^n C_i \right) = \sum_{i = 1}^n \psi(C_i) = \sum_{i=1}^n \psi(\bb R^{\dim C_i}) = \sum_{i = 1}^n \alpha (-1)^{\dim C_i} = \alpha \left(\sum_{i = 1}^n (-1)^{\dim C_i}\right) = \alpha \cdot \chi(A). \]
   Hence, we can deduce that for any definable set $A$, $\psi(A) = \alpha \cdot \chi(A)$.
\end{proof}

As a remark, we observe that the rank of cohomology with compact support satisfies assumption (1) and (3) in the statement of Proposition~\ref{prop:trivialization} but not assumption (2). For example, the rank of $H^1_c((-1, 1))$ is equal to $1$, but the ranks of $H^1_c((-1, 0) \cup (0, 1))$ and $H^1_c(\{0\})$ are $2$ and $0$ respectively.

\subsection{Right Continuity of the Euler Characteristic Transform}\label{section: Right Continuity of the Euler Characteristic Transform}

As a consequence of Theorem~\ref{thm:general_right_continuous}(\ref{sub:right_continuous}), we show the right continuity of the ECT. This is a generalization of Remark 4.14 in \cite{curry2022many} (also see Proposition 5.18 of \cite{curry2022many}, Lemma 2.3 of \cite{bestvina1997morse}, and $\S$VI.3 \cite{edelsbrunner2010computational}), which showed that the ECT is right-continuous on piecewise linearly embedded simplicial complexes. Theorem~\ref{thm:ECT_right_continuous} below only assumes that $S$ is definable, which is a much weaker assumption. In contrast, the statement of Remark 4.14 in \cite{curry2022many} assumed that $S \subseteq \bb R^n$ is setwise a compact geometric simplicial complex and thus imposed some rigidity on the geometry of $S$. For example, $\bb S^n$ is not a compact geometric simplicial complex but is definable.

\begin{thm}\label{thm:ECT_right_continuous}
    Let $S \subseteq \bb R^n$ be definable. For each fixed $v\in\mathbb{S}^{n-1}$, the following univariate function is right-continuous 
    \begin{align*}
        \ECT(S)(v, -):\ & \bb R \rightarrow \bb Z, \\
        & t \mapsto \chi(S_t^v)=\ECT(S)(v, t).
    \end{align*}
\end{thm}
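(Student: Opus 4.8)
The plan is to derive Theorem~\ref{thm:ECT_right_continuous} as a direct specialization of Theorem~\ref{thm:general_right_continuous}(\ref{sub:right_continuous}). Fix $v \in \bb S^{n-1}$. The key observation is that the linear functional $\varphi_v : \bb R^n \to \bb R$, $x \mapsto x \cdot v$, is semialgebraic, hence definable in any o-minimal structure satisfying the axioms of Definition~\ref{def:o-minimal} (since such a structure contains all semialgebraic sets); in particular its restriction $f := \varphi_v|_S : S \to \bb R$ is a definable function on the definable set $S$. By the notation established in Equation~\eqref{eq: sublevel set notation}, we have $S^v_t = S^{f}_t$ for every $t \in \bb R$, and by Equation~\eqref{eq: ECT of CS}, $\ECT(S)(v,t) = \chi(S^v_t) = \chi(S^f_t)$.

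First I would record these identifications explicitly. Then I would simply invoke Theorem~\ref{thm:general_right_continuous}(\ref{sub:right_continuous}) applied to this particular $S$ and $f$: it asserts that $t \mapsto \chi(S^f_t)$ is right continuous on $\bb R$. Rewriting $\chi(S^f_t) = \ECT(S)(v,t)$ gives that $t \mapsto \ECT(S)(v,t)$ is right continuous, which is exactly the claim. One can also note in passing that $\ECT(S)(v,-)$ is $\bb Z$-valued because $\mathbbm{1}_S$ is constructible and the Euler integral in Equation~\eqref{eq: ECT of CF} of a constructible function is an integer; this justifies writing the codomain as $\bb Z$.

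There is really no substantive obstacle here — the theorem is a corollary, and the only point requiring a sentence of justification is that $\varphi_v$ is definable, which follows from the fact that any o-minimal structure meeting Conditions (1)--(6) of Definition~\ref{def:o-minimal} contains all semialgebraic sets (via the Tarski--Seidenberg theorem, as noted after Definition~\ref{def:o-minimal}); the graph of $\varphi_v$ is semialgebraic since $v$ has fixed real coordinates and the defining relation $\sum_i v_i x_i = t$ is polynomial. If one wanted to avoid even citing semialgebraicity, definability of $\varphi_v$ also follows directly from Conditions (1), (2), and (6) (graphs of addition and scalar multiplication, products with $\bb R$, and closure under projection), but citing the semialgebraic containment is cleaner. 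The remaining content of Theorem~\ref{thm:general_right_continuous}, namely Part~(\ref{sub:negative}), additionally shows $\ECT(S)(v,t) = 0$ for all sufficiently negative $t$, which one may mention as a bonus but is not needed for the stated right continuity.
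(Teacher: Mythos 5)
Your proposal is correct and follows essentially the same route as the paper: both reduce to Theorem~\ref{thm:general_right_continuous}(\ref{sub:right_continuous}) after checking that $\varphi_v$ is a definable function on $S$. The paper verifies definability by exhibiting the graph $\Gamma(\varphi_v)$ as an intersection of two definable subsets of $\bb R^{n+1}$, while you invoke semialgebraicity of the graph together with the Tarski--Seidenberg containment noted after Definition~\ref{def:o-minimal}; these are two phrasings of the same elementary fact.
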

\begin{proof}
Suppose $v\in\mathbb{S}^{n-1}$ is arbitrarily chosen and fixed. For this $v$, we define function $\phi_v$ by the following
\begin{align*}
    \phi_v:\ \ & S \rightarrow \mathbb{R}, \\
    & x\mapsto x\cdot v,
\end{align*}
which is continuous. The graph $\Gamma(\phi_v)$ of $\phi_v$ can be represented as follows
\begin{align*}
    \Gamma(\phi_v)= \{(x, t) \in \bb R^n \times \bb R : x \in S \text{ and } x\cdot v - t = 0\} = (S \times \bb R) \cap \{(x, t) \in \bb R^{n+1}\ |\ x\cdot v - t = 0\}.
\end{align*}
Since both $S \times \bb R$ and $\{(x, t) \in \bb R^{n+1}\ |\ v \cdot x - t = 0\}$ are definable, the graph $\Gamma(\phi_v)$ is definable, indicating that $\phi_v$ is a definable function. The right continuity of $\ECT(S)(v, -)$ then follows from Theorem~\ref{thm:general_right_continuous}(\ref{sub:right_continuous}) and choosing $f = \varphi_v$.
\end{proof}

Theorem \ref{thm:ECT_right_continuous} directly implies the following corollary, as any constructible function is a linear combination of indicator functions of definable sets \citep[e.g., see the discussion in Section 3.6 of][]{ghrist2014elementary}.
\begin{cor}
Suppose $g: X \to \bb Z$ is a constructible function. Then, $\ECT(g)(v, -): t \mapsto \operatorname{ECT}(g)(v,t)$ is right-continuous.
\end{cor}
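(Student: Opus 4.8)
The plan is to reduce the statement to Theorem~\ref{thm:ECT_right_continuous} by decomposing $g$ into a finite $\bb Z$-linear combination of indicator functions of definable sets. Since $g: X \to \bb Z$ is constructible it is bounded and takes only finitely many nonzero values; denoting these by $n_1, \dots, n_m$ and setting $A_j := g^{-1}(n_j)$, each $A_j$ is a definable subset of $X$ and $g = \sum_{j=1}^m n_j \mathbbm{1}_{A_j}$.

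Next I would unwind the definition $\ECT(g)(v,t) = \int_X g \cdot \mathbbm{1}_{X^v_t}\, d\chi$ using Equation~\eqref{eq:euler-integral}. For $n \neq 0$ the fiber $(g \cdot \mathbbm{1}_{X^v_t})^{-1}(n)$ equals $A_j \cap X^v_t$ when $n = n_j$ and is empty otherwise, while the fiber over $0$ contributes nothing to the Euler integral; hence $\ECT(g)(v,t) = \sum_{j=1}^m n_j\, \chi(A_j \cap X^v_t)$. Because $A_j \subseteq X$ we have $A_j \cap X^v_t = (A_j)^v_t = \{x \in A_j : x \cdot v \leq t\}$, so $\ECT(g)(v,t) = \sum_{j=1}^m n_j\, \ECT(A_j)(v,t)$ for every $t \in \bb R$.

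Finally, by Theorem~\ref{thm:ECT_right_continuous} applied to each definable set $A_j$, the function $t \mapsto \ECT(A_j)(v,t) = \chi\big((A_j)^v_t\big)$ is right continuous for the fixed $v$. A finite $\bb Z$-linear combination of right continuous functions $\bb R \to \bb Z$ is again right continuous, which gives the claim.

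There is essentially no serious obstacle here; the only point requiring a little care is the bookkeeping that takes $\int_X g \cdot \mathbbm{1}_{X^v_t}\, d\chi$ to the sum $\sum_j n_j \chi(A_j \cap X^v_t)$ — i.e., additivity of the Euler integral over the disjoint decomposition $g = \sum_j n_j \mathbbm{1}_{A_j}$ — but this is immediate from finite additivity of $\chi(\cdot)$ together with the definition of $\int(\cdot)\, d\chi$.
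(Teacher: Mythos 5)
Your proof is correct and takes essentially the same approach as the paper, which simply observes that the corollary follows from Theorem~\ref{thm:ECT_right_continuous} because any constructible function is a finite $\bb Z$-linear combination of indicator functions of definable sets. You have merely spelled out the bookkeeping (the decomposition $g = \sum_j n_j \mathbbm{1}_{A_j}$ and the resulting identity $\ECT(g)(v,t) = \sum_j n_j \ECT(A_j)(v,t)$) that the paper leaves implicit.
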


The arguments implemented in the proofs of Theorem \ref{thm:general_right_continuous} and \ref{thm:ECT_right_continuous} do not imply the left continuity of $\ECT(S)(v, -)$. The obstacle arises from the inherent structure of the interval $(t - \epsilon, t]$, which consistently includes the endpoint $t$, irrespective of the chosen value for $\epsilon > 0$. This differs from the context of right continuity, where the right endpoint of the interval $(t, t + \epsilon]$ offers greater flexibility. Importantly, the function $\ECT(S)(v, -)$ is not left-continuous at its discontinuities, as demonstrated by the following example.
\begin{exmp}\label{exmp::ECT_not_left_cont}
    Consider the shape $S=\{x\in\mathbb{R}^2: \Vert x\Vert\le1\}\subseteq\mathbb{R}^2$. Let $v\in\mathbb{S}^1$ be arbitrarily chosen and fixed. We have the following:
    \begin{enumerate}
        \item When $t<-1$, the set $S_t^v$ is empty. Then, $\ECT(S)(v, t)=\chi(S_t^v)=0$.
        \item When $t\ge-1$, the set $S_t^v$ is non-empty and compact, and it deformation retracts to a point. Then, $\ECT(S)(v, t)=\chi(S_t^v)=1$.
    \end{enumerate}
    Therefore, for each fixed $v$, we have $\ECT(S)(v, t)=\mathbbm{1}\{t\ge-1\}$, which is a right-continuous function of $t$. However, it is not left-continuous at $t = -1.$
\end{exmp}

\subsection{Right Continuity of the Euler-Radon Transform}\label{section: Right Continuity of the Euler-Radon Transform}

Before discussing the ERT (Definition~\ref{def:ert}), we first study the LECT and SELECT (Definition~\ref{def:lect_select}) as preparation. As a result of Theorem~\ref{thm:general_right_continuous}(\ref{sub:right_continuous}) and Theorem~\ref{thm:ECT_right_continuous}, we obtain the following right continuity results.
\begin{cor}\label{cor::SE/LECT_right_continuous}
    Suppose $X \subseteq \bb R^n$ is a definable set and $g: X \to \bb R$ is a definable function. We have the following:
    \begin{enumerate}
        \item For each fixed direction $v \in \bb S^{n-1}$ and fixed $s \in \bb R$, the functions $t \mapsto \LECT(g)(v, t, s)$ and $t \mapsto \SELECT(g)(v, t, s)$ are both right-continuous.
        \item For each fixed $v\in\mathbb{S}^{n-1}$ and fixed $t \in \bb R$, the function $s \mapsto \SELECT(g)(v, t, s)$ is left-continuous.
    \end{enumerate}
\end{cor}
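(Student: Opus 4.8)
The plan is to reduce all three continuity statements to the right continuity results already established, Theorem~\ref{thm:general_right_continuous}(\ref{sub:right_continuous}) and Theorem~\ref{thm:ECT_right_continuous}, by recognizing each of $\LECT$ and $\SELECT$, for fixed values of the remaining parameters, as the Euler characteristic of a sublevel set of a definable function on a definable set. The only preliminary observation needed is that for any definable $A\subseteq X$ one has $\int_X \mathbbm{1}_A\,d\chi=\chi(A)$, immediate from Equation~\eqref{eq:euler-integral}, so that $\LECT(g)(v,t,s)=\chi\big(\{x\in X:g(x)=s\}\cap X^v_t\big)$ and $\SELECT(g)(v,t,s)=\chi\big(\{x\in X:g(x)\ge s\}\cap X^v_t\big)$.

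For Part (1), I would fix $v\in\bb S^{n-1}$ and $s\in\bb R$. The set $L_s:=\{x\in X:g(x)=s\}=g^{-1}(\{s\})$ is definable because $g$ is definable and $\{s\}$ is definable, so $\LECT(g)(v,t,s)=\chi\big((L_s)^v_t\big)=\ECT(L_s)(v,t)$, and Theorem~\ref{thm:ECT_right_continuous} applied to the definable set $L_s$ gives right continuity in $t$. The same argument with the definable set $G_s:=\{x\in X:g(x)\ge s\}$ yields $\SELECT(g)(v,t,s)=\chi\big((G_s)^v_t\big)=\ECT(G_s)(v,t)$, hence right continuity of $t\mapsto\SELECT(g)(v,t,s)$.

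For Part (2), I would fix $v\in\bb S^{n-1}$ and $t\in\bb R$. Here $Y:=X^v_t$ is definable and $h:=-g|_Y:Y\to\bb R$ is a definable function; for every $s\in\bb R$ one has $\{x\in Y:g(x)\ge s\}=\{x\in Y:h(x)\le -s\}=Y^h_{-s}$, so $\SELECT(g)(v,t,s)=\chi\big(Y^h_{-s}\big)$. Theorem~\ref{thm:general_right_continuous}(\ref{sub:right_continuous}) says $u\mapsto\chi(Y^h_u)$ is right continuous; precomposing with the order-reversing homeomorphism $s\mapsto -s$ converts right continuity at $u_0=-s_0$ into left continuity at $s_0$, since $s\uparrow s_0$ forces $-s\downarrow -s_0$. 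This gives the left continuity of $s\mapsto\SELECT(g)(v,t,s)$.

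There is no real obstacle beyond selecting the correct definable set in each case; the one point requiring care is the sign bookkeeping in Part (2), where superlevel sets of $g$ must be rewritten as sublevel sets of $-g$ before the general theorem applies, and the accompanying reversal of ``right'' into ``left'' continuity must be tracked. No two-sided statement should be expected in $t$: failure of left continuity in $t$ is already visible in Example~\ref{exmp::ECT_not_left_cont}, and the same shape exhibits it for $\LECT$ and $\SELECT$.
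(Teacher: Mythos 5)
Your proposal is correct and matches the paper's own proof: Part (1) is handled by recognizing $\LECT(g)(v,\cdot,s)$ and $\SELECT(g)(v,\cdot,s)$ as $\ECT$ of the definable sets $\{g=s\}$ and $\{g\ge s\}$ and invoking Theorem~\ref{thm:ECT_right_continuous}, and Part (2) is handled by rewriting the superlevel set of $g$ as a sublevel set of $-g$ on $X^v_t$ and invoking Theorem~\ref{thm:general_right_continuous}(\ref{sub:right_continuous}) with the sign flip $s\mapsto -s$ turning right into left continuity. This is exactly the paper's argument, just with slightly more explicit bookkeeping.
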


\begin{proof}
    According to the definition of the ECT in Equation~\eqref{eq: ECT of CF}, the function $t \mapsto \LECT(g)(v, t, s)$ can be represented using the ECT as
    \[t \mapsto \LECT(g)(v, t, s) = \ECT(\{g = s\})(v, t) = \ECT(\mathbbm{1}_{\{g = s\}})(v, t).\]
    Hence, this function is right-continuous by Theorem~\ref{thm:ECT_right_continuous}. Similarly, we may represent the function $t \mapsto \SELECT(g)(v, t, s)$ as
    \[t \mapsto \SELECT(g)(v, t, s) = \ECT(\{g \geq s\})(v, t)  = \ECT(\mathbbm{1}_{\{g \geq s\}})(v, t).\]
The proof of result (1) is completed. For $(2)$, using the notation defined in Equation~\eqref{eq: sublevel set notation}, the function $s \mapsto \SELECT(g)(v, t, s)$ can be represented as follows
    \[s \mapsto \SELECT(g)(v, t, s) = \chi\left(\{x \in X\ |\ x \cdot v \leq t, -g(x) \leq -s \} \right) = \chi\left(\{x \in X\ |\ x \cdot v \leq t \}^{-g}_{-s} \right).\]
    This function is then left-continuous by Theorem~\ref{thm:general_right_continuous}(\ref{sub:right_continuous}).
\end{proof}

Following Corollary~\ref{cor::SE/LECT_right_continuous}, we now show that the ERT proposed in \cite{meng2023Inference} is right-continuous.
\begin{thm}\label{thm:ERT_right_continuous}
    Let $X \subseteq \bb R^{n}$ be a definable set and $g: X \to \bb R$ be a bounded and compactly supported definable function. For each fixed $v \in \bb S^{n-1}$, 
    \begin{enumerate}
        \item the function $t \mapsto \ERT(g)(v, t)$ is right-continuous; hence, it is Borel measurable;
        \item for every compact interval \(\mathcal{I}=[L,U]\), the function \(t \mapsto \mathrm{ERT}(g)(v,t)\) belongs to \(L^{1}(\mathcal{I})\), i.e., it is Lebesgue integrable on \(\mathcal{I}\).
    \end{enumerate}
\end{thm}

\begin{proof}
    Part (1). Following \cite{meng2023Inference} (equivalently, using Proposition 2 of \cite{baryshnikov2010euler}), we represent the ERT using Lebesgue integrals as follows
    \begin{align}\label{eq: equation to express ERT}
        &\ERT(g)(v, t)  = \int_0^\infty G(v, t, s) \, ds, \ \ \text{ where }\\
        \notag & G(v, t, s):= \left\{\SELECT(g)(v, t, s) - \SELECT(-g)(v, t, s) \right\} +  \frac{1}{2} \left\{\LECT(-g)(v, t, s) - \LECT(g)(v, t, s)\right\}.
    \end{align}
    Since $t\mapsto G(v, t, s)$ is a finite sum of right-continuous functions by Corollary~\ref{cor::SE/LECT_right_continuous}, $G(v, t, s)$ is right-continuous with respect to $t$. 
    
    Since $g$ is a bounded function, there exists some $R > 0$ such that $G(v, t, s) = 0$ for all $|s| > R$. By Lemma~\ref{lem:SELECT_LECT_tame_property}, $G(v, t, s)$ takes only finitely many values $c_1, ..., c_n$, as $(v, t, s)$ ranges through $\bb S^{n-1} \times \bb R \times \bb R$. To apply the dominated convergence theorem (DCT), we define the dominating function $F(s)$ to be
    \[F(s) = \left(\max_{1 \leq i \leq n} |c_n|\right) \cdot \mathbbm{1}_{[-R, R]}(s).\]
    Since $\int_0^\infty F(s) ds <\infty$ and $|G(v, t, s)| \leq F(s)$ for all $(v, t, s) \in \bb S^{n-1} \times \bb R \times \bb R$, the DCT and the right continuity of $t\mapsto G(v, t, s)$ imply
    \[\lim_{\epsilon \to 0^+} \int_{0}^{\infty} \{G(v, t + \epsilon, s) - G(v, t, s)\} ds = \int_{0}^{\infty} \lim_{\epsilon \to 0^+} \{G(v, t + \epsilon, s) - G(v, t, s)\} ds = 0.\]
    Hence, we conclude that
    \begin{align*}
        \lim_{\epsilon \to 0^+} \ERT(g)(v, t + \epsilon) - \ERT(g)(v, t) &= \lim_{\epsilon \to 0^+} \int_{0}^{\infty} \{G(v, t + \epsilon, s) - G(v, t, s)\} ds = 0.
    \end{align*}
    Thus, the function $t \mapsto \ERT(g)(v, t)$ is right-continuous. We note the statement that $t \mapsto \operatorname{ERT}(g)(v, t)$ is Borel measurable was also given in \citep[][Theorem 3.3]{meng2023Inference}.
    
    Part (2). The result presented in part (2) has been previously established in the literature \citep[][Section 3]{meng2023Inference}. For the convenience of the reader and for completeness, we include a proof of part (2) here. The Borel measurability of $t \mapsto \ERT(g)(v, t)$ follows from its right continuity. In addition, since $\vert \ERT(g)(v, t) \vert = \left\vert \int_0^\infty G(v, t, s) \, ds\right\vert \le \int_0^\infty \vert G(v, t, s)\vert \, ds \le \int_0^\infty \vert F(s)\vert \, ds<\infty$, we have
    \begin{align*}
        \int_L^U  \vert \ERT(g)(v, t) \vert \,dt \le \int_L^U \left(\int_0^\infty \vert F(s)\vert \, ds\right)\,dt = (U-L)\cdot \int_0^\infty \vert F(s)\vert \, ds <\infty. 
    \end{align*}
    Therefore, the function \(t \mapsto \mathrm{ERT}(g)(v,t)\) is Lebesgue integrable.
\end{proof}

\subsection{Middle Continuity of the Euler Characteristic}\label{section: Middle Continuity of the Euler Characteristic} As an application of Theorem~\ref{thm:ECT_right_continuous}, we will prove that taking the Euler characteristic of a ``neighborhood" of a definable fiber $f^{-1}(t)$ converges to the Euler characteristic of the $f^{-1}(t)$ when ``shrinking the neighborhood". This result, stated more precisely in Proposition \ref{prop:euler_middle_cont} below, helps to connect the ECT, LECT, and SELECT (see Definitions \ref{def:ECT} and \ref{def:lect_select}).
\begin{prop}\label{prop:euler_middle_cont}
        Let $S$ be a definable subset of $\bb R^n$ and $f: S \to \bb R$ be a definable function. For any $t \in \bb R$,
        \[\lim_{\delta \to 0^+} \chi\left(f^{-1}([t-\delta, t+\delta])\right) = \chi\left(f^{-1}(t)\right).\]
\end{prop}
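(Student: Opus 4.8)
The plan is to express $f^{-1}([t-\delta, t+\delta])$ as a ``difference'' of two sublevel sets and then exploit the one-sided continuity results already proven. Concretely, for $\delta > 0$ we have
\[
f^{-1}([t-\delta,\, t+\delta]) = S^f_{t+\delta} \setminus S^f_{t-\delta^-},
\]
but it is cleaner to write things purely in terms of sublevel sets: by finite additivity of $\chi$,
\[
\chi\!\left(f^{-1}([t-\delta,\,t+\delta])\right) = \chi\!\left(S^f_{t+\delta}\right) - \chi\!\left(S^f_{t-\delta} \setminus f^{-1}(\{t-\delta\})\right) = \chi\!\left(S^f_{t+\delta}\right) - \chi\!\left(S^f_{t-\delta}\right) + \chi\!\left(f^{-1}(\{t-\delta\})\right).
\]
So the problem reduces to understanding the behavior as $\delta \to 0^+$ of the three terms $\chi(S^f_{t+\delta})$, $\chi(S^f_{t-\delta})$, and $\chi(f^{-1}(\{t-\delta\}))$.

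First I would handle $\chi(S^f_{t+\delta})$: by Theorem~\ref{thm:general_right_continuous}(\ref{sub:right_continuous}), the function $\tau \mapsto \chi(S^f_\tau)$ is right continuous, so $\chi(S^f_{t+\delta}) \to \chi(S^f_t)$ as $\delta \to 0^+$. Next, for $\chi(S^f_{t-\delta})$, I would use Lemma~\ref{lem:general_tame_property}: the function $\tau \mapsto \chi(S^f_\tau)$ is piecewise constant with finitely many discontinuities, so for all sufficiently small $\delta > 0$ the value $\chi(S^f_{t-\delta})$ is constant, equal to the left-hand limit $\chi(S^f_{t^-})$. Finally, for the fiber term $\chi(f^{-1}(\{t-\delta\}))$: again by the trivialization theorem (Lemma~\ref{lemma:: finite homeomorphism types} applied to the definable set $\{(x,\tau) : x \in S,\ f(x) = \tau\}$, or directly by cell decomposition of the definable function $\tau \mapsto \chi(f^{-1}(\tau))$), the map $\tau \mapsto \chi(f^{-1}(\tau))$ is piecewise constant with finitely many discontinuities, so $\chi(f^{-1}(\{t-\delta\}))$ is constant for all sufficiently small $\delta > 0$, equal to $\chi(f^{-1}(t^-))$, the left-hand limiting fiber Euler characteristic.

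Putting these together, for all sufficiently small $\delta > 0$,
\[
\chi\!\left(f^{-1}([t-\delta,\,t+\delta])\right) = \chi(S^f_t) - \chi(S^f_{t^-}) + \chi(f^{-1}(t^-)),
\]
and it remains to identify the right-hand side with $\chi(f^{-1}(t))$. The key identity is $\chi(S^f_t) - \chi(S^f_{t^-}) = \chi(f^{-1}(t)) - \chi(f^{-1}(t^-))$: intuitively, $S^f_t$ differs from the ``limit from the left'' exactly by the fiber at $t$, while passing through $t$ the fiber itself jumps from $f^{-1}(t^-)$ to $f^{-1}(t)$. To make this rigorous I would work on a small interval $(t-\delta_0, t)$ on which $f$ is definably trivial (shrinking so that $(t-\delta_0, t) \subseteq A_i$ for one trivializing piece $A_i$, as in the proof of Theorem~\ref{thm:general_right_continuous}): there $f^{-1}((t-\delta_0, t)) \cong (t-\delta_0,t) \times B_i$, so $\chi(f^{-1}((t-\delta_0,t))) = 0$ and $\chi(f^{-1}((t-\delta_0, t])) = \chi(f^{-1}(t))$; combining with the corresponding statement for $\tau < t - \delta_0$ and finite additivity yields $\chi(S^f_t) = \chi(S^f_{t-\delta_0}) + \chi(f^{-1}(t))$ while $\chi(S^f_{t^-}) = \chi(S^f_{t-\delta_0}) + \chi(f^{-1}(t^-))$, which is exactly the identity needed. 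Substituting gives $\chi(f^{-1}([t-\delta, t+\delta])) = \chi(f^{-1}(t))$ for all small $\delta$, which is even stronger than the claimed limit.

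The main obstacle is the bookkeeping around the left-hand behavior at $t$: one must carefully track the half-open intervals $(t-\delta_0, t]$ versus $(t-\delta_0, t)$ and ensure the trivialization piece containing the punctured interval $(t-\delta_0, t)$ is used consistently for both the sublevel sets and the fibers. Once the decomposition $\chi(S^f_\tau) = \chi(S^f_{t-\delta_0}) + \chi(f^{-1}((t-\delta_0, \tau]))$ is set up correctly for $\tau$ slightly less than, equal to, and slightly greater than $t$, everything collapses via the multiplicativity $\chi(I \times B) = \chi(I)\chi(B)$ and the vanishing $\chi((a,b)) = \chi((a,b]) - \chi(\{b\}) = 0$, just as in Theorem~\ref{thm:general_right_continuous}.
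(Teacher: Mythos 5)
Your three-term decomposition $\chi(f^{-1}([t-\delta, t+\delta])) = \chi(S^f_{t+\delta}) - \chi(S^f_{t-\delta}) + \chi(f^{-1}(t-\delta))$ is correct, and so is the ``key identity'' $\chi(S^f_t) - \chi(S^f_{t^-}) = \chi(f^{-1}(t)) - \chi(f^{-1}(t^-))$ that you want. But the derivation you give for the key identity rests on a false computation: you write ``$\chi((a,b)) = \chi((a,b]) - \chi(\{b\}) = 0$,'' whereas $\chi((a,b]) = 0$ and $\chi(\{b\}) = 1$, so $\chi((a,b)) = -1$. Consequently $\chi(f^{-1}((t-\delta_0,t))) \cong \chi((t-\delta_0,t) \times B_i) = -\chi(B_i)$, which is not zero in general, and both auxiliary equations you display are wrong as stated. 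Already for $S = \mathbb{R}$ and $f(x) = x$ one has $\chi(S^f_t) = \chi((-\infty,t]) = 0$ and $\chi(S^f_{t-\delta_0}) = 0$, yet $\chi(f^{-1}(t)) = 1$, so ``$\chi(S^f_t) = \chi(S^f_{t-\delta_0}) + \chi(f^{-1}(t))$'' reads $0 = 1$; the companion equation fails for the same reason. The two errors (both are off by exactly $-\chi(B_i)$) cancel when you subtract, which is why the key identity and the final conclusion come out right, but the argument as written does not establish them.

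The fix is easy: for $\tau$ just below $t$ in the trivializing piece, $f^{-1}((t-\delta_0,\tau]) \cong (t-\delta_0,\tau] \times B_i$ has $\chi = 0$, giving $\chi(S^f_{t^-}) = \chi(S^f_{t-\delta_0})$ with no fiber term; and $\chi(S^f_t) = \chi(S^f_{t-\delta_0}) + \chi(f^{-1}((t-\delta_0,t))) + \chi(f^{-1}(t)) = \chi(S^f_{t-\delta_0}) - \chi(B_i) + \chi(f^{-1}(t))$, while $\chi(f^{-1}(t^-)) = \chi(B_i)$. Subtracting still yields the key identity. That said, the detour through sublevel sets and left limits is more work than the paper's direct route, which partitions $f^{-1}([t-\delta,t+\delta])$ as $f^{-1}([t-\delta,t)) \sqcup f^{-1}(t) \sqcup f^{-1}((t,t+\delta])$ and kills the two side pieces at once because both $[t-\delta,t)$ and $(t,t+\delta]$ are half-open intervals with $\chi = 0$; there is then nothing to cancel and no left limits to track.
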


\begin{proof}
    Similar to the proof of Theorem~\ref{thm:general_right_continuous}, the ``cell decomposition theorem" \citep[][Chapter 3, Theorem 2.11]{van1998tame} reduces the proposition to the case where $f$ is continuous. As $f$ is continuous and definable, the ``trivialization theorem" \citep[][Chapter 9, Theorem 1.2]{van1998tame} implies that for all $\delta > 0$ sufficiently small, there exists some definable sets $A$ and $B$ such that $f^{-1}([t-\delta, t))$ and $f^{-1}((t, t+\delta]))$ are definably homeomorphic to $[t-\delta, t) \times A$ and $(t, t+\delta] \times B$, respectively. As the Euler characteristic distributes over finite Cartesian products, we have that
    \[\chi\left(f^{-1}([t-\delta, t)\right) = \chi\left([t-\delta, t)\right) \cdot \chi(A) = 0 \ \ \text{ and }\ \ \chi\left(f^{-1}((t, t+\delta])\right) = \chi\left((t, t+\delta])\right) \cdot\chi(B) = 0.\]
    By the finite additivity of the Euler characteristic, we have that
    \begin{align*}
        \chi\left(f^{-1}([t-\delta, t+\delta])\right) &= \chi\left(f^{-1}([t-\delta, t))\right) + \chi(f^{-1}(t)) + \chi\left(f^{-1}((t, t+\delta]))\right) \\
        &= 0 + \chi\left(f^{-1}(t)\right) + 0 \\
        &= \chi(f^{-1}(t)).
    \end{align*}
\end{proof}

As a corollary, we also obtain the following relationship between LECT, SELECT, and ECT.

\begin{cor}\label{cor: connect the ECT, LECT, and SELECT}
    Let $S$ be a definable subset of $\bb R^n$ and $g: S \to \bb R$ be a definable function. For any $t \in \bb R$,
    \[\lim_{\delta \to 0^+} \left\{\operatorname{SELECT}(g)(v, t, s - \delta) + \operatorname{SELECT}(-g)(v, t, -s - \delta) \right\} = \operatorname{LECT}(g)(v, t, s) + \operatorname{ECT}(S)(v,t).\]
\end{cor}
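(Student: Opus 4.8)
The plan is to reduce the corollary to Proposition~\ref{prop:euler_middle_cont} by rewriting each of the four transforms in the statement as the Euler characteristic of a suitable definable set, and then to recognize the relevant sets as fibers (and half-neighborhoods of fibers) of an auxiliary definable function on the sublevel set $S^v_t$. First I would fix $v \in \bb S^{n-1}$, $t \in \bb R$, $s \in \bb R$, and pass to the definable set $X \coloneqq S^v_t = \{x \in S : x \cdot v \le t\}$, which is definable because $S$ and $\{x \cdot v \le t\}$ are. On $X$ consider the restriction of $g$, still definable. Unwinding Definition~\ref{def:lect_select} and using that $\mathbbm{1}_{X^v_t}$ is just $\mathbbm{1}_X$ on $X$, one has $\operatorname{LECT}(g)(v,t,s) = \chi(\{x \in X : g(x) = s\}) = \chi(g|_X^{-1}(s))$, and similarly $\operatorname{SELECT}(g)(v,t,s-\delta) = \chi(\{x \in X : g(x) \ge s - \delta\}) = \chi(g|_X^{-1}([s-\delta,\infty)))$ and $\operatorname{SELECT}(-g)(v,t,-s-\delta) = \chi(\{x \in X : -g(x) \ge -s-\delta\}) = \chi(g|_X^{-1}((-\infty, s+\delta]))$. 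Also $\operatorname{ECT}(S)(v,t) = \chi(S^v_t) = \chi(X)$ by Definition~\ref{def:ECT}(ii).

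Next I would assemble these pieces. By finite additivity of $\chi(\cdot)$ applied to the disjoint decomposition
\begin{align*}
g|_X^{-1}([s-\delta,\infty)) \;\sqcup\; g|_X^{-1}((-\infty, s+\delta]) \;=\; X \;\sqcup\; g|_X^{-1}([s-\delta, s+\delta]),
\end{align*}
where the overlap of the two sets on the left is exactly $g|_X^{-1}([s-\delta, s+\delta])$, we get
\begin{align*}
\chi\!\left(g|_X^{-1}([s-\delta,\infty))\right) + \chi\!\left(g|_X^{-1}((-\infty, s+\delta])\right) = \chi(X) + \chi\!\left(g|_X^{-1}([s-\delta, s+\delta])\right).
\end{align*}
Therefore the left-hand side of the corollary equals $\operatorname{ECT}(S)(v,t) + \lim_{\delta \to 0^+}\chi(g|_X^{-1}([s-\delta, s+\delta]))$. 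Now apply Proposition~\ref{prop:euler_middle_cont} to the definable set $X$ and the definable function $g|_X : X \to \bb R$ at the value $s$: the limit equals $\chi(g|_X^{-1}(s)) = \operatorname{LECT}(g)(v,t,s)$. Combining gives exactly the claimed identity.

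I do not expect a serious obstacle here; the content is entirely in Proposition~\ref{prop:euler_middle_cont}, and what remains is bookkeeping. The one point demanding a little care is the inclusion–exclusion step: I should verify that $g|_X^{-1}([s-\delta,\infty))$ and $g|_X^{-1}((-\infty,s+\delta])$ genuinely cover $X$ (true as soon as $\delta \ge 0$) and that their intersection is the preimage of $[s-\delta,s+\delta]$ (immediate), so that finite additivity applies cleanly to $\chi$ of the union of two overlapping definable sets via $\chi(A \cup B) = \chi(A) + \chi(B) - \chi(A \cap B)$. A second minor point is to note that Proposition~\ref{prop:euler_middle_cont} is stated for a function on a definable set, so I must record that $X$ is definable and $g|_X$ is definable (its graph is the intersection of the graph of $g$ with $X \times \bb R$), which is immediate. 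No compactness or continuity hypothesis on $g$ is needed, matching the hypotheses of Proposition~\ref{prop:euler_middle_cont}.
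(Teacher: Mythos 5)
Your proposal is correct and follows essentially the same route as the paper: rewrite each transform as the Euler characteristic of a subset of $S^v_t$, apply inclusion--exclusion for $\chi$ to the cover $\{g \ge s-\delta\} \cup \{g \le s+\delta\} = S^v_t$, and then pass to the limit $\delta \to 0^+$ using Proposition~\ref{prop:euler_middle_cont}. The only difference is cosmetic: you explicitly cite Proposition~\ref{prop:euler_middle_cont} and record the definability of $S^v_t$ and $g|_{S^v_t}$, whereas the paper leaves the appeal to the proposition implicit.
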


\begin{proof}
    Unwrapping the definitions, we have that
    \begin{enumerate}
        \item $\operatorname{SELECT}(g)(v, t, s - \delta) = \chi(\{x \in S^v_t\ |\ g(x) \geq s - \delta\}),$
        \item $\operatorname{SELECT}(-g)(v, t, -s-\delta) = \chi(\{x \in S^v_t\ |\ -g(x) \geq -s-\delta\}) = \chi(\{x \in S^v_t\ |\ g(x) \leq s+\delta\}),$
        \item $\operatorname{LECT}(g)(v, t, s) = \chi(\{x \in S^v_t\ |\ g(x) = s\})$.
    \end{enumerate}
    The corollary now follows from taking the limit $\delta \to 0^+$ on the following equation implied by the finite additivity of the Euler characteristic,
    \[\chi(\{x \in S^v_t\ |\ g(x) \geq s - \delta\}) + \chi(\{x \in S^v_t\ |\ g(x) \leq s + \delta\}) = \chi(S^v_t) + \chi(\{x \in S^v_t\ |\ s - \delta \leq g(x) \leq s + \delta\}). \]
\end{proof}

\section{Homotopy Type of Compact Definable Sublevel Sets}\label{section: Homotopy Types}

Proposition~\ref{prop:trivialization} shows the obstructions in generalizing the right continuity to other invariants (e.g., Betti numbers) on general definable sets and definable functions. In this section, we will restrict our attention to compact definable sets $K \subseteq\mathbb{R}^n$, which is always satisfied in practical applications of TDA (e.g., the sets $K$ represent glioblastoma multiforme tumors in \cite{crawford2020predicting}, and the sets $K$ represent mandibular molars of primates in \cite{wang2021statistical} and \cite{meng2022randomness}). The notation $K$ is preferred over $S$ to emphasize this compactness constraint.

In Section~\ref{section: Right Continuity of Homotopy Type}, we will prove in Theorem~\ref{thm:homotopy_K} that the homotopy type of definable sublevel sets on $K$ is right-continuous with respect to a continuous definable function $\Phi: K \to \bb R$. In particular, this would imply that the singular Betti numbers of definable sublevel sets on $K$ would vary right-continuously. In Section~\ref{section: Corollaries of Right Homotopy}, we will also discuss two additional corollaries of Theorem~\ref{thm:homotopy_K}.

\subsection{Right Continuity of Homotopy Type}\label{section: Right Continuity of Homotopy Type}

Motivated by Theorem~\ref{thm:morse_result}, we have the following result on the homotopy type of definable sublevel sets. 

\begin{thm}\label{thm:homotopy_K}
    Let $K$ be a compact definable subset of $\bb R^n$ and $\Phi: K \to \bb R$ be a continuous definable function.
    \begin{enumerate}
        \item \label{sub:continuous_definable}For any $t \in \bb R$, $K^{\Phi}_{t + \delta} = \Phi^{-1}((-\infty, t + \delta])$ deformation retracts to $K^{\Phi}_{t} = \Phi^{-1}((-\infty, t])$ for all $\delta > 0$ sufficiently small.
    \end{enumerate}
    In particular, for any fixed $t \in \bb R$, $v \in \bb S^{n-1}$, and $\Phi(x) = \varphi_v(x) = x \cdot v$ be as in Theorem~\ref{thm:ECT_right_continuous}, we obtain the following two consequences:
    \begin{enumerate}[resume]
        \item \label{sub:deformation_retract} The definable set $K^{v}_{t + \delta}$ deformation retracts onto $K^{v}_{t}$ for all $\delta > 0$ sufficiently small.
        \item \label{sub:betti_number}For each fixed $v \in \bb S^{n-1}$ and integer $k$, the function $t\mapsto\beta_k(K_t^v)$ is right-continuous, where $\beta_k(K_t^v)$ denotes the $k$-th Betti number of $K_t^v$. 
    \end{enumerate}
\end{thm}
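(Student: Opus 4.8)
The plan is to prove Part (\ref{sub:continuous_definable}) — the deformation-retraction statement for a continuous definable $\Phi: K \to \bb R$ on a compact definable set $K$ — since Parts (\ref{sub:deformation_retract}) and (\ref{sub:betti_number}) follow immediately: Part (\ref{sub:deformation_retract}) is the special case $\Phi = \varphi_v$ (which is continuous and definable, exactly as verified in the proof of Theorem~\ref{thm:ECT_right_continuous}), and Part (\ref{sub:betti_number}) follows because a deformation retraction is in particular a homotopy equivalence, so $\beta_k(K^v_{t+\delta}) = \beta_k(K^v_t)$ for all small $\delta > 0$, and Lemma~\ref{lem:general_tame_property}-type tameness (here the trivialization theorem applied to $\{(t,x) : x \in K,\ \varphi_v(x) - t \le 0\}$) guarantees only finitely many jumps, so right continuity at the jump points is exactly this equality.

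For Part (\ref{sub:continuous_definable}), the key tool is again the \textbf{trivialization theorem} (\citep[][Chapter 9, Theorem 1.2]{van1998tame}), but applied more carefully than in Theorem~\ref{thm:general_right_continuous}, because we now need an actual homeomorphism of the relevant sets, not just equality of Euler characteristics. First I would fix $t \in \bb R$. By the trivialization theorem applied to $\Phi: K \to \bb R$, there is a finite definable partition of $\bb R$ into points and open intervals $A_1, \dots, A_M$ and, over each $A_i$, a definable homeomorphism $h_i : \Phi^{-1}(A_i) \to A_i \times B_i$ commuting with the projection to $A_i$. Choose $\delta > 0$ small enough that the half-open interval $(t, t+\delta]$ lies entirely inside a single one of these pieces, say $A_1$ (possible since $A_1,\dots,A_M$ are finitely many points and open intervals). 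Then the "slab" $\Phi^{-1}((t, t+\delta])$ is definably homeomorphic, over the base, to $(t, t+\delta] \times B_1$, and under this homeomorphism the fiber $\Phi^{-1}(t+\delta')$ over each $t' = t+\delta' \in (t, t+\delta]$ corresponds to $\{t'\} \times B_1$.

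Now I would build the deformation retraction of $K^\Phi_{t+\delta} = \Phi^{-1}((-\infty, t]) \cup \Phi^{-1}((t, t+\delta])$ onto $K^\Phi_t = \Phi^{-1}((-\infty, t])$ by using the product structure on the slab to "push down" the slab onto its lower boundary $\Phi^{-1}(t)$, while fixing $\Phi^{-1}((-\infty, t])$. Explicitly: on the slab $\Phi^{-1}((t, t+\delta]) \cong (t, t+\delta] \times B_1$, define $H : \big(\Phi^{-1}((t,t+\delta]) \cup \text{(boundary)}\big) \times [0,1] \to K^\Phi_{t+\delta}$ by $(h_1^{-1}(s, b), u) \mapsto h_1^{-1}((1-u)s + ut,\, b)$ for $s > t$, extended as the identity on $\Phi^{-1}((-\infty, t])$, and agreeing with the continuous closure on $\Phi^{-1}(t)$. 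The main obstacle I expect — and the one requiring real care — is \textbf{continuity of $H$ at the seam} $\Phi^{-1}(t)$ where the slab meets the lower set: $h_1$ is only guaranteed to be a homeomorphism of $\Phi^{-1}(A_1)$ with $A_1$ an \emph{open} interval (or a point), so its behavior as $s \downarrow t$ (approaching the missing endpoint $t$) need not extend continuously, and one must instead work with a slightly larger piece, or use the compactness of $K$ together with the curve selection / good-stratification refinements of the trivialization theorem to extend the local triviality to the closed slab $\Phi^{-1}([t, t+\delta])$. One clean route: apply the trivialization theorem to the definable map $\Phi$ restricted over a small closed interval $[t-\eta, t+\delta]$ after possibly refining so that $t$ itself is one of the partition points, then note that $\Phi^{-1}([t, t+\delta]) \to [t, t+\delta]$ is definably trivial over each of the two pieces $\{t\}$ and $(t, t+\delta]$; gluing requires that the trivialization over $(t, t+\delta]$ extends to $t$, which holds after a further application of the theorem on $[t, t+\delta]$ itself since $[t,t+\delta]$ is connected and the theorem gives triviality over a dense open piece whose complement is lower-dimensional — hence empty in dimension one once we put $t$ on the boundary. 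Compactness of $K$ is what ensures the fibers $B_1$ are themselves compact and the retraction is well-defined (no escape to infinity), and it is also implicitly what lets us speak of singular homology and Betti numbers in Part (\ref{sub:betti_number}) as homotopy invariants.

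Once continuity of $H$ is secured, verifying that $H(-, 0) = \id$, $H(-, 1)$ maps into $K^\Phi_t$, and $H(x, u) = x$ for all $x \in K^\Phi_t$ and all $u$ is routine from the definition, giving the deformation retraction; and since the choice of $\delta$ only needed $(t, t+\delta] \subseteq A_1$, the statement "for all $\delta > 0$ sufficiently small" is exactly what we obtain.
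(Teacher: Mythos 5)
You correctly isolate the difficulty — continuity of your retraction at the seam $\Phi^{-1}(t)$ — but your proposed resolution does not work, and this is precisely the point the paper's proof is built to handle.

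The issue: the trivialization theorem gives a homeomorphism $\Phi^{-1}(A_1) \cong A_1 \times B_1$ over an open interval $A_1$, and you argue this triviality should extend to the closed interval $[t, t+\delta]$ after refining so $t$ is a partition point, "since the theorem gives triviality over a dense open piece whose complement is lower-dimensional — hence empty in dimension one once we put $t$ on the boundary." This last step is false. When the trivialization theorem is applied to a 1-dimensional base such as $[t,t+\delta]$, the exceptional lower-dimensional set is a nonempty finite collection of points, and $t$ itself will generically be one of them. More fundamentally, the trivialization over $(t, t+\delta]$ simply \emph{cannot} extend to a trivialization over $[t, t+\delta]$ in general, because the homeomorphism type of the fiber can change at $t$. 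The paper's Figure~\ref{fig:mapping_cylinder} illustrates exactly this: the slab $f^{-1}((0,\delta])$ is a half-open solid cylinder, trivializing as $(0,\delta]\times(\text{disk})$, while the fiber $f^{-1}(0)$ is a square (different homotopy and homeomorphism type). A minimal example is $K = \{(x,y): |x| \le y,\, 0 \le y \le 1\}$, $\Phi(x,y) = y$, $t = 0$: the fiber over $0$ is a point, while the fibers over $s>0$ are intervals. Your straight-line formula $h_1^{-1}((1-u)s + ut, b)$ is then literally undefined at $u=1$ since $t \notin A_1$, and no amount of refining the partition repairs this, because the fiber over $t$ has the wrong shape to be matched with $B_1$.

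What the paper does instead is invoke a genuinely stronger tool, the local conic structure lemma (Lemma~\ref{lem:local_conic_structure}), which produces a continuous definable map $h: f^{-1}(\delta) \times [0,\delta] \to f^{-1}([0,\delta])$ that is a homeomorphism only on $f^{-1}(\delta)\times(0,\delta]$, while on the closed end it yields a (possibly non-injective) map $g(x) = h(x,0): f^{-1}(\delta) \to f^{-1}(0)$. The slab $\Phi^{-1}([t,t+\delta])$ is then shown to be homeomorphic, rel $\Phi^{-1}(t)$, to the \emph{mapping cylinder} $M_g$ of $g$, and the deformation retraction of $M_g$ onto its target is a standard fact. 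This mapping-cylinder step is exactly the piece missing from your argument: it is the mechanism that compensates for the failure of the product structure to extend across the critical fiber, and it is the reason compactness of $K$ (so that the closed-map/quotient-map identification can be made) is genuinely used and not merely convenient. Without some such device your homotopy $H$ cannot be made continuous as $u \to 1$ whenever $t$ is a critical value of $\Phi$.

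Your reductions of Parts (\ref{sub:deformation_retract}) and (\ref{sub:betti_number}) to Part (\ref{sub:continuous_definable}) are correct and match the paper.
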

\noindent Theorem~\ref{thm:homotopy_K}(\ref{sub:deformation_retract}) implies Theorem~\ref{thm:ECT_right_continuous} in the special case of compact definable sets. Theorem~\ref{thm:homotopy_K} may not be true when $K$ is not compact, which is demonstrated by the following example.
\begin{exmp}\label{exmp:counter_homotopy_non_compact}
    Consider $S = \{x \in \bb R: x > 0\} \subseteq \mathbb{R}^1$ and $\Phi(x) = v \cdot x$ where $v$ is the positive unit vector on the real line. Note that $S$ is definable but not compact.
    \begin{enumerate}
        \item For any $t > 0$, we have $S_t^v = \{x \in \mathbb{R}: 0 < x \leq t\}\ne\varnothing$.
        \item For any $t\le0$, we have $S_t^v = \varnothing$.
    \end{enumerate}
    Hence, no matter how small $\delta$ is, $S_\delta^v$ does not deformation retract onto $S_0^v$.
\end{exmp}

The proof of Theorem~\ref{thm:homotopy_K} will rely on the following lemma:
\begin{lem}[Exercise 4.11 of \cite{Coste2002ANIT}]\label{lem:local_conic_structure}
Let $Z \subseteq S$ be two closed and bounded definable sets. Let $f$ be a nonnegative continuous definable function on $S$ such that $f^{-1}(0) = Z$. Then there exists $\delta > 0$ sufficiently small and a continuous definable map $h: f^{-1}(\delta) \times [0, \delta] \to f^{-1}([0, \delta])$ such that
\begin{enumerate}
    \item \label{sub:perserve_t} $f(h(x, t)) = t$ for every $(x, t) \in f^{-1}(\delta) \times [0, \delta]$,
    \item \label{sub:perserve_end} $h(x, \delta) = x$ for every $x \in f^{-1}(\delta)$,
    \item \label{sub:homeomorphism} $h$ restricted to $f^{-1}(\delta) \times (0, \delta]$ is a homeomorphism onto $f^{-1}((0, \delta])$.
\end{enumerate}
\end{lem}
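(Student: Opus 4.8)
The plan is to deduce Lemma~\ref{lem:local_conic_structure} from a definable triangulation, which converts it into an explicit piecewise-linear construction. First I would triangulate the data: by the triangulation theorem for definable functions \citep[][Chapter 8]{van1998tame}, applied to the closed bounded definable set $S$ and the continuous definable function $f$, there is a finite simplicial complex $K$ in some $\bb R^N$ and a definable homeomorphism $\Psi: |K| \to S$ with $g := f\circ\Psi$ affine on each closed simplex of $K$; since $g$ is affine and nonnegative on each simplex it is either identically $0$ or strictly positive on each open simplex, so $g^{-1}(0)$ is the subcomplex $|L|$ spanned by the vertices $v$ with $g(v) = 0$, and $\Psi$ carries it onto $Z = f^{-1}(0)$. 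As $\Psi$ is a definable homeomorphism, it suffices to produce a continuous definable $\tilde h: g^{-1}(\delta)\times[0,\delta] \to g^{-1}([0,\delta])$ satisfying (\ref{sub:perserve_t})--(\ref{sub:homeomorphism}) with $g$ in place of $f$, and then set $h(x,t) := \Psi(\tilde h(\Psi^{-1}(x),t))$; the three properties transfer verbatim because $g = f\circ\Psi$.

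Next I would build $\tilde h$ by hand. Choose $\delta$ with $0 < \delta < \min\{g(v) : v \text{ a vertex of } K,\ g(v) > 0\}$, so that every vertex has $g$-value $0$ or $> \delta$. For a simplex $\sigma \notin L$, let $\tau_\sigma$ be the face spanned by the vertices of $\sigma$ with $g = 0$, and for $q \in \{g = \delta\}\cap\bar\sigma$ let $\pi_\sigma(q) \in \tau_\sigma$ be the point obtained by renormalizing the barycentric coordinates of $q$ supported on $\tau_\sigma$ (these do not all vanish, else $g(q) > \delta$). Set $h_\sigma(q,t) := (1 - t/\delta)\,\pi_\sigma(q) + (t/\delta)\,q$. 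Affinity of $g$ together with $g|_{\tau_\sigma}\equiv 0$ give $g(h_\sigma(q,t)) = t$; also $h_\sigma(q,\delta) = q$, $h_\sigma(q,0) = \pi_\sigma(q) \in |L|$, and an explicit barycentric computation produces a continuous inverse, so $h_\sigma$ is a homeomorphism of $(\{g=\delta\}\cap\bar\sigma)\times(0,\delta]$ onto $\{0 < g \le \delta\}\cap\bar\sigma$. Each $h_\sigma$ is continuous (including at $t=0$) and definable, and they are compatible on shared faces: if $\rho$ is a common face of $\sigma_1,\sigma_2$ then $\tau_{\sigma_1}\cap\bar\rho = \tau_{\sigma_2}\cap\bar\rho$ is the face of $\rho$ spanned by its $g=0$ vertices and $\pi_{\sigma_1},\pi_{\sigma_2}$ agree on $\{g=\delta\}\cap\bar\rho$. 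Since $K$ is finite, the pasting lemma glues the $h_\sigma$ into a continuous definable $\tilde h$ with the required properties, which is a homeomorphism over $(0,\delta]$ because it is so on each simplex and the pieces are glued along closed faces. Transporting through $\Psi$ completes the argument.

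The step I expect to be the main obstacle --- and the reason one cannot simply quote the trivialization theorem \citep[][Chapter 9]{van1998tame} --- is continuity of $h$ along the base slice $f^{-1}(\delta)\times\{0\}$. That theorem only yields definable triviality of $f$ over $(0,\delta]$ (triviality over $[0,\delta]$ would force the fibers $Z = f^{-1}(0)$ and $f^{-1}(\delta)$ to be homeomorphic, which already fails for a disk and its center), and an arbitrary definable trivialization over $(0,\delta]$ need not extend continuously to $t = 0$: one can cook up a definable counterexample in an o-minimal expansion containing $\exp$, gluing fibers by an identification of the form $x\mapsto x^{1/t}$. The real content of the lemma is thus the selection of a trivialization that does extend continuously, and the triangulation above supplies one for which the fiberwise limit as $t\to 0^+$ is the visibly continuous projection $\pi_\sigma$. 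Compactness of $S$ is used essentially (it underpins the triangulation of $S$ by a finite complex, and in a trivialization-only proof it is what would force the fiberwise limits to exist, via the monotonicity theorem); it cannot be dropped, as Example~\ref{exmp:counter_homotopy_non_compact} illustrates downstream for the companion sublevel-set statement.
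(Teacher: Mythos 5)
Your proposal is correct and takes essentially the same approach as the paper's proof: triangulate $(S,f)$ so that $f$ becomes simplexwise affine, project $q\in f^{-1}(\delta)$ onto the zero subcomplex by renormalizing the barycentric coordinates supported on the $g=0$ vertices, and interpolate linearly along the resulting segment (your closed-form $h_\sigma(q,t)=(1-t/\delta)\,\pi_\sigma(q)+(t/\delta)\,q$ is exactly the point the paper characterizes implicitly as the $y'$ on the line from $x'$ to $q(x')$ with $f(\rho(y'))=t$), then glue across shared faces via the pasting lemma. Your closing observation---that the trivialization theorem only gives definable triviality over $(0,\delta]$ and an arbitrary such trivialization need not extend continuously to $t=0$, so the triangulation is doing real work---is accurate and a worthwhile clarification that the paper leaves implicit.
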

\noindent For example, the attached cylinder $f^{-1}((0, \delta])$ in Figure~\ref{fig:Fig2a} is homeomorphic to $f^{-1}(\delta) \times (0, \delta]$ in Figure~\ref{fig:Fig2b}.
\begin{figure}[h]
  \centering
  \subfloat[]{\includegraphics[width=0.4\textwidth]{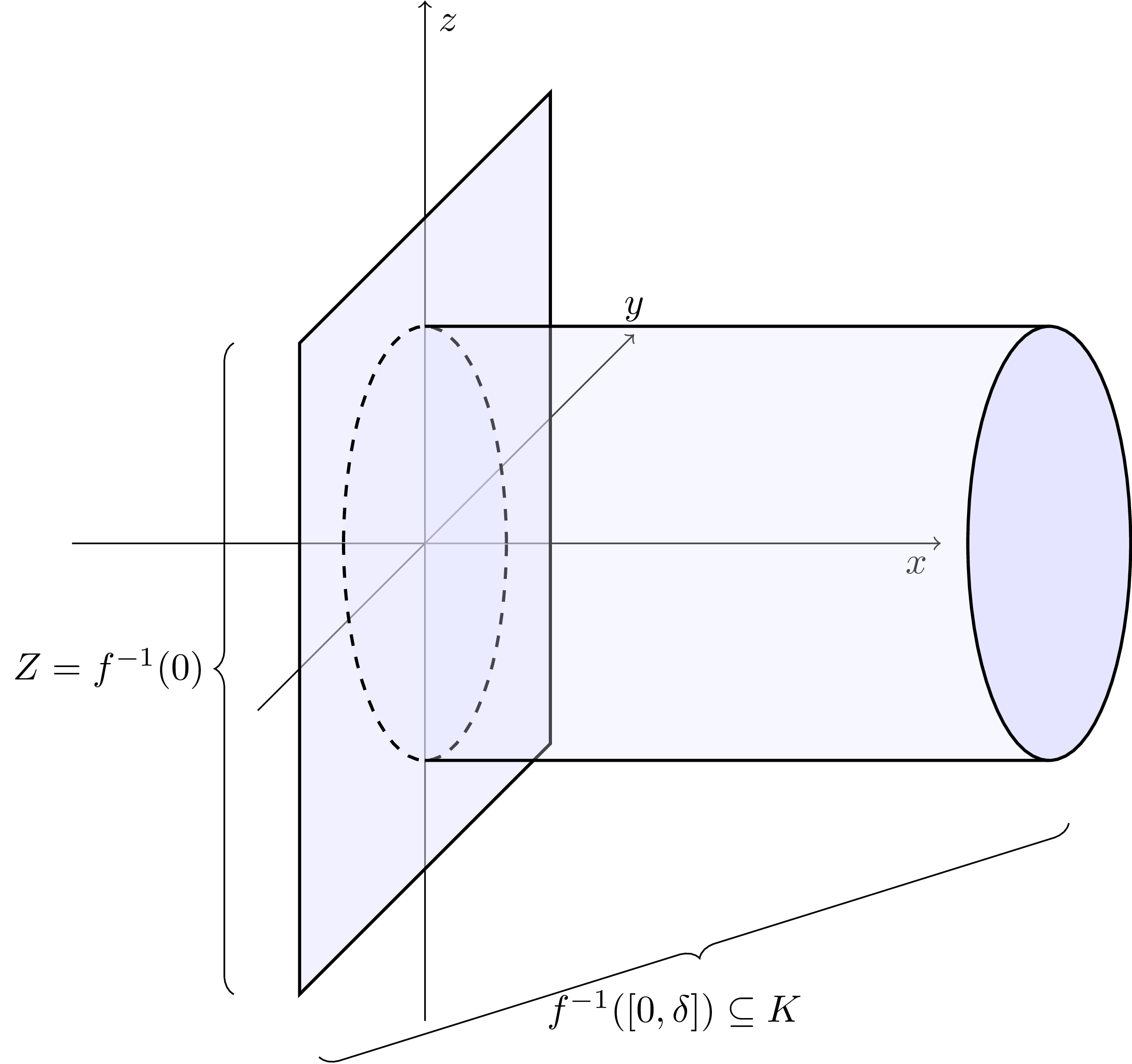}\label{fig:Fig2a}}
  \hfill
  \subfloat[]{\includegraphics[width=0.4\textwidth]{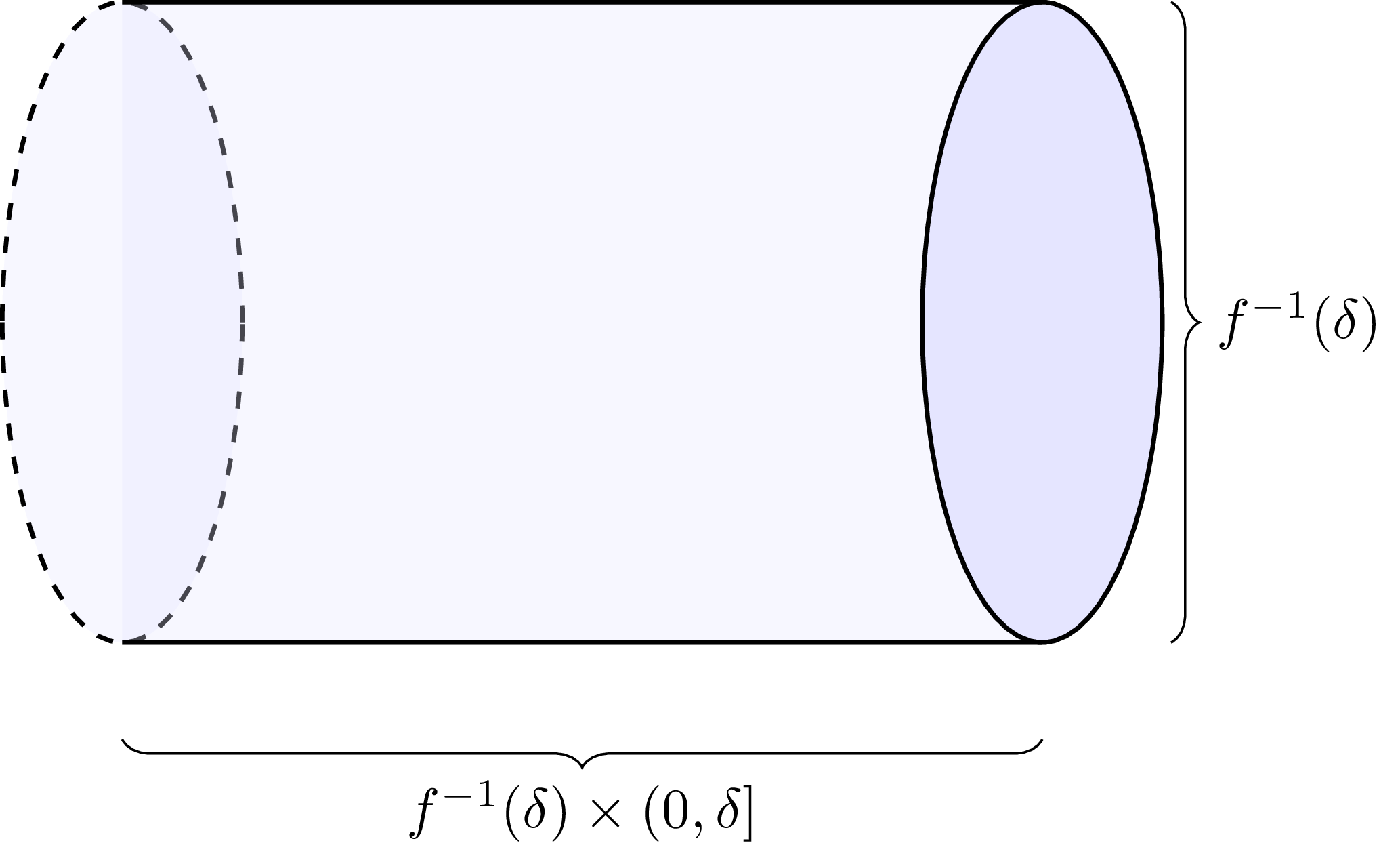}\label{fig:Fig2b}}
  \caption{An example illustrating Lemma~\ref{lem:local_conic_structure} with $f$ being the projection to $x$-coordinate. The left figure shows a solid cylinder attached to a square. The right figure shows that the solid cylinder with one base missing $f^{-1}((0,\delta]$) is definably homeomorphic to $(0, \delta] \times f^{-1}(\delta)$.}
  \label{fig:mapping_cylinder}
\end{figure}
\noindent Because this is a known result, we will leave its proof in the Appendix (Section~\ref{subsec:proof_local_conic}). We are now ready to prove Theorem~\ref{thm:homotopy_K}.
\begin{proof}[Proof of Theorem~\ref{thm:homotopy_K}]
We will first prove Part (\ref{sub:continuous_definable}). Part (\ref{sub:deformation_retract}) then follows directly from the definition of $K^v_t$, and Part (\ref{sub:betti_number}) is a direct corollary of Part (\ref{sub:deformation_retract}) as Betti numbers are homotopy invariants on compact sets.

For Theorem~\ref{thm:homotopy_K}(\ref{sub:continuous_definable}), it suffices for us to show that $\Phi^{-1}([t, t + \delta])$ deformation retracts onto $\Phi^{-1}(\{t\})$. This is because, by the ``pasting lemma" \citep[][Theorem 18.3]{Munkres_2014}, we can combine this deformation retract with the identity homotopy on $\Phi^{-1}((-\infty, t])$ to create a deformation retract of $\Phi^{-1}((-\infty, t + \delta]))$ onto $\Phi^{-1}((-\infty, t])$. 

Consider the continuous definable non-negative function $f: \Phi^{-1}([t, +\infty)) \to \bb R$ given by $f(x) = \Phi(x) - t$. We apply Lemma~\ref{lem:local_conic_structure} with $S = \Phi^{-1}([t, +\infty))$, $Z = f^{-1}(0) = \Phi^{-1}(t)$, and $f^{-1}([0, \delta]) = \Phi^{-1}([t, t + \delta])$. By Lemma~\ref{lem:local_conic_structure}, there exists $\delta > 0$ sufficiently small and a continuous definable map $h: f^{-1}(\delta) \times [0, \delta] \to f^{-1}([0, \delta])$ with properties $(1), (2), (3)$ listed in the lemma. Now consider the map
\[F: (f^{-1}(\delta) \times [0, \delta]) \sqcup f^{-1}(0) \to f^{-1}([0, \delta]) = \Phi^{-1}([t, t + \delta]), \]
whose restriction to $f^{-1}(\delta) \times [0, \delta]$ is the map $h$ and whose restriction to $f^{-1}(0)$ is the identity embedding. $F$ is a continuous map by the pasting lemma. We also observe that $F$ is surjective because $h$ restricted to $f^{-1}(\delta) \times (0, \delta]$ is a homeomorphism onto $f^{-1}((0, \delta])$ and the restriction of $F$ on $f^{-1}(0)$ is surjective onto $f^{-1}(0)$.

Since $f^{-1}(\delta) \times [0, \delta]$ and $f^{-1}(0)$ are both compact, $F$ is a continuous map from a compact topological space into a Hausdorff space. Hence, $F$ is a closed continuous surjection and is thus a quotient map. Thus, $F$ induces a homeomorphism between $f^{-1}([0, \delta])$ and the quotient space $(f^{-1}(\delta) \times [0, \delta]) \sqcup f^{-1}(0) /\sim$, where $\sim$ is given by the relation $\xi \sim \eta$ if and only if $F(\xi) = F(\eta)$. 

For ease of notation, let $P$ denote $(f^{-1}(\delta) \times [0, \delta]) \sqcup f^{-1}(0)$. We will treat the equivalence relation $\sim$ explicitly as a subset of $P \times P$. We will write $R_1 = \{(\xi, \eta) \in P \times P\ |\ F(\xi) = F(\eta)\}$ as the equivalence relation $\sim$ on $P$ given by the quotient map $F: P \to f^{-1}([0, \delta])$.

Let $g: f^{-1}(\delta) \to f^{-1}(0)$ be the continuous map defined by $g(x) = h(x, 0)$ and $M_g$ be the mapping cylinder of $g$ \citep[][Chapter 0]{hatcher2002algebraic}. Note that $g$ is well-defined as the image of $f^{-1}(\delta)$ under $g$ is contained in $f^{-1}(0)$ by Lemma~\ref{lem:local_conic_structure}(\ref{sub:perserve_t}). The mapping cylinder $M_g$ can be realized as the quotient space of $P$ with the smallest equivalence relation $\sim'$ containing the relations $(x, 0) \sim' g(x) = h(x, 0)$ for all $(x, 0) \in f^{-1}(\delta) \times \{0\}$. Explicitly, we can denote $\sim'$ as $R_2 \subseteq P \times P$, where $R_2$ is the intersection of all equivalence relations on $P$ containing the set $X = \{((x, 0), h(x, 0))\ |\ (x, 0) \in f^{-1}(\delta) \times \{0\}\}$.

We claim that $R_1 = R_2$. First of all, $F(x, 0) = h(x, 0) = id(h(x, 0)) = F(h(x, 0))$, hence $X \subseteq R_1$. Thus, $R_2$ is a subset of $R_1$. Conversely, to show that $R_1 \subseteq R_2$, it suffices for us to show that for any equivalence relation $R$ on $P$ containing $X$, $R$ contains $R_1$. Indeed, suppose $R$ contains $X$, and let $(\xi, \eta) \in P \times P$ such that $F(\xi) = F(\eta)$, then we wish to show that $(\xi, \eta) \in R$ by doing the following caseworks:
\begin{itemize}
    \item If $\xi, \eta \in (f^{-1}(\delta) \times (0, \delta]) \cup f^{-1}(0)$, the restriction of $F$ to this subspace is injective, hence $F(\xi) = F(\eta)$ implies $\xi = \eta$. Clearly $(\xi, \xi) \in R$ by reflexivity.
\end{itemize}
\quad \quad By Lemma~\ref{lem:local_conic_structure}(\ref{sub:perserve_t}), we know that the image of $f^{-1}(\delta) \times \{t\}$ under $F$ is contained in $f^{-1}(t)$. Therefore, if $\xi \in f^{-1}(\delta) \times \{0\}$ such that $F(\xi) = F(\eta)$, then $\eta$ must be in either $f^{-1}(0)$ or $f^{-1}(\delta) \times \{0\}$.
\begin{itemize}
    \item If $\xi = (x, 0) \in f^{-1}(\delta) \times \{0\}$ and $\eta \in f^{-1}(0)$, in this case $h(x, 0) = F(\xi) = F(\eta) = id(\eta) = \eta$. Hence, $(\xi, \eta) = ((x, 0), h(x, 0)) \in X \subseteq R$.
    \item If $\xi = (x, 0), \eta = (y, 0) \in f^{-1}(\delta) \times \{0\}$, in this case $h(x, 0) = F(\xi) = F(\eta) = h(y, 0)$. Since $X \subseteq R$, we know that $((x, 0), h(x, 0)), ((y, 0), h(y, 0)) \in R$. By symmetry, $(h(y, 0), (y, 0)) \in R$.

    Since $((x, 0), h(x, 0)), (h(y, 0), (y, 0)) \in R$ and $h(x, 0) = h(y, 0)$, we have that $((x, 0), (y, 0)) = (\xi, \eta) \in R$ by transitivity.
    \item Finally, if $\xi \in f^{-1}(0)$ and $\eta \in f^{-1}(\delta) \times \{0\}$, this is covered by a previous case via symmetry.
\end{itemize}
Thus, we conclude that $R_1 \subseteq R$ for any equivalence relation $R$ on $P$ that contains $X$. Therefore, $R_1 \subseteq R_2$. It then follows that $R_1 = R_2$.

Hence, it follows that $\Phi^{-1}([t, t + \delta]) = f^{-1}([0, \delta])$ is homeomorphic to $P/\sim\ = P/\sim'\ = M_g$ with a homeomorphism that is the identity on $f^{-1}(0) = \Phi^{-1}(\{t\})$. It is a well-known fact in algebraic topology \citep[][Chapter 0]{hatcher2002algebraic} that the mapping cylinder $M_g$ deformation retracts to $f^{-1}(0)$.

Thus, carrying the deformation retract in $M_g$ back to $\Phi^{-1}([t, t + \delta])$, we have that $\Phi^{-1}([t, t + \delta])$ deformation retracts to $\Phi^{-1}(\{t\})$. Hence, $K^{\Phi}_{t + \delta} = \Phi^{-1}((-\infty, t + \delta])$ deformation retracts to $K^{\Phi}_{t} = \Phi^{-1}((-\infty, t])$ for all $\delta > 0$ sufficiently small.
\end{proof}

\begin{rem}\label{rem: only_proper}
    The proof of Theorem~\ref{thm:homotopy_K} only needs that the preimages of points and closed intervals under $\Phi$ are compact. In other words, it suffices for us to remove the compactness constraint on $K$ and require the map $\Phi$ to be continuous, definable, and proper. This also aligns with the discussion of the homotopy type of sublevel sets of a smooth manifold $M$ with respect to a Morse function $f$ in \cite{milnor1963morse} (Section I.3 therein), where only the preimages of closed intervals under $f$ are assumed to be compact. In the specific o-minimal structure of globally subanalytic sets, Theorem~\ref{thm:homotopy_K}(\ref{sub:betti_number}) is also a consequence of Theorem 1.11 in \cite{Kashiwara_Schapira_2018}.
\end{rem}

\subsection{Corollaries of Theorem~\ref{thm:homotopy_K}}\label{section: Corollaries of Right Homotopy}

From Theorem~\ref{thm:homotopy_K}, we also obtain the following two corollaries. The first corollary is a ``LECT and SELECT versions" of Theorem~\ref{thm:homotopy_K}.
\begin{cor}\label{cor: LECT and SELECT versions of Theorem 4.1}
    Let $K$ be a compact definable subset of $\bb R^n$ and $g: K \to \bb R$ be a continuous, definable function.
    \begin{enumerate}
        \item The definable set $\{x \in K_{t + \delta}^v : g(x) \geq s\}$ and $\{x \in K_{t + \delta}^v : g(x) = s\}$ deformation retracts to $\{x\in K_t^v: g(x)\geq s\}$ and $\{x\in K_t^v: g(x)=s\}$, respectively, for $\delta > 0$ sufficiently small.
        \item The definable set $\{x \in K_{t}^v : g(x) \geq s-\delta\}$ deformation retracts to $\{x \in K_{t}^v : g(x) \geq s\}$ for $\delta > 0$ sufficiently small.
    \end{enumerate}
\end{cor}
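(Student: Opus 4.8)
The plan is to deduce both parts from Theorem~\ref{thm:homotopy_K}, applying it not to $K$ itself but to suitable compact definable subsets carrying suitable continuous definable functions, so that each part becomes an instance of Theorem~\ref{thm:homotopy_K}(\ref{sub:deformation_retract}) or (\ref{sub:continuous_definable}).

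For part~(2): since $\varphi_v$ is continuous and $(-\infty,t]$ is closed, $K^v_t=K\cap\varphi_v^{-1}((-\infty,t])$ is closed in the compact set $K$, hence a compact definable set, and when $g$ is continuous $-g|_{K^v_t}$ is a continuous definable function. Rewriting $\{x\in K^v_t:g(x)\ge s-\delta\}=(K^v_t)^{-g}_{-s+\delta}$ and $\{x\in K^v_t:g(x)\ge s\}=(K^v_t)^{-g}_{-s}$, part~(2) is precisely Theorem~\ref{thm:homotopy_K}(\ref{sub:continuous_definable}) applied with $K^v_t$ in place of $K$, with $-g|_{K^v_t}$ in place of $\Phi$, and with $-s$ in place of $t$.

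For part~(1): when $g$ is continuous, $\{x\in K:g(x)\ge s\}=g^{-1}([s,\infty))$ and $\{x\in K:g(x)=s\}=g^{-1}(\{s\})$ are closed subsets of $K$, hence compact definable sets, and Theorem~\ref{thm:homotopy_K}(\ref{sub:deformation_retract}) applied with each of these in place of $K$ (keeping the same $v$) yields, for all sufficiently small $\delta>0$, the deformation retractions of $(\{g\ge s\}\cap K)^v_{t+\delta}=\{x\in K^v_{t+\delta}:g(x)\ge s\}$ onto $\{x\in K^v_t:g(x)\ge s\}$ and of $\{x\in K^v_{t+\delta}:g(x)=s\}$ onto $\{x\in K^v_t:g(x)=s\}$. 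Compactness of the subset is the point: it forces $\varphi_v$ to attain its minimum there, which is what keeps a nonempty sublevel set from being asked to retract onto an empty one.

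I expect the only genuine obstacle to be pushing part~(1) past continuous $g$ to arbitrary definable $g$: then $\{g\ge s\}\cap K$ and $\{g=s\}\cap K$ need not be closed in $K$, Theorem~\ref{thm:homotopy_K} no longer applies to them directly, and the frontier of $\{g\ge s\}$ can meet $\varphi_v^{-1}(t)$ so as to obstruct the retraction (a half-open definable arc whose missing endpoint lies on $\varphi_v^{-1}(t)$ already shows this). To handle that case one would re-run the mapping-cylinder/quotient-map argument in the proof of Theorem~\ref{thm:homotopy_K} with a strengthening of Lemma~\ref{lem:local_conic_structure} that is \emph{compatible with a finite family of definable subsets} (here $\{g\ge s\}$ and $\{g=s\}$), so that the conic map $h$ and the collapse $x\mapsto h(x,0)$ restrict to those subsets; the compatible trivialization required is available in o-minimal geometry \citep[Chapter~9]{van1998tame}, and one then checks, using that a closed continuous surjection restricts to a quotient map over any subset of the target, that the restricted $h$ still realizes the mapping cylinder of its own restriction. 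That compatibility step — together with the non-closed behavior it must absorb — is where I would expect essentially all the work to lie.
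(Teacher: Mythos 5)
Your reductions for part~(2), and for part~(1) under the extra hypothesis that $g$ is continuous, are precisely the ``direct application of Theorem~\ref{thm:homotopy_K}'' that the paper asserts and omits: in part~(1) you substitute the compact definable sets $\{x\in K:g(x)\ge s\}$ and $\{x\in K:g(x)=s\}$ for $K$ (compactness coming from continuity of $g$) and keep $\varphi_v$, while in part~(2) you substitute the compact definable set $K^v_t$ for $K$, take $\Phi=-g$, and read sufficiently small right-perturbations of $-s$. Both substitutions are correct and the conclusions follow from Theorem~\ref{thm:homotopy_K}(\ref{sub:continuous_definable}) and (\ref{sub:deformation_retract}) exactly as you say.

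The final paragraph of your proposal, however, should be dropped, because the obstacle you identify for merely definable $g$ is not a defect of your method but a genuine counterexample to the statement itself, and no strengthening of Lemma~\ref{lem:local_conic_structure} can repair it. Concretely, take $K=[0,1]\subset\mathbb{R}$, $v=1$, $t=0$, $s=1$, and define $g(0)=0$, $g(x)=1$ for $x>0$; then $g$ is definable but discontinuous, $\{x\in K^v_\delta:g(x)\ge 1\}=(0,\delta]$ is nonempty for every $\delta>0$, and $\{x\in K^v_0:g(x)\ge 1\}=\varnothing$. A nonempty space cannot deformation retract onto the empty set, so no ``compatible trivialization'' version of the mapping-cylinder argument, nor the fact that a quotient map restricts to a quotient map over a saturated subset, can produce the claimed retraction. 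The conclusion to draw is that part~(1) requires $g$ to be continuous just as Theorem~\ref{thm:homotopy_K} requires $\Phi$ to be continuous, and the corollary should be read with that hypothesis in force; once that is granted, your reduction proves it and the proposed extension to arbitrary definable $g$ should simply be abandoned.
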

\noindent The proof of Corollary \ref{cor: LECT and SELECT versions of Theorem 4.1} is a direct application of Theorem~\ref{thm:homotopy_K} and is omitted here.

The second corollary is a ``middle continuity" statement for the homotopy type of compact definable level sets. It is a generalization of Proposition~\ref{prop:euler_middle_cont} in the case of compact definable sets with continuous definable functions.
\begin{cor}\label{thm:compact_middle_cont}
    Let $K$ be a compact definable subset of $\bb R^n$ and $\Phi: K \to \bb R$ be a continuous definable function. For any $t \in \bb R$, $\Phi^{-1}([t-\delta, t+\delta])$ deformation retracts to $\Phi^{-1}(t)$ for all $\delta > 0$ sufficiently small.
\end{cor}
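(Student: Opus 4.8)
The plan is to cut the band $\Phi^{-1}([t-\delta,t+\delta])$ along the level set $\Phi^{-1}(t)$ into the two one-sided bands $\Phi^{-1}([t-\delta,t])$ and $\Phi^{-1}([t,t+\delta])$, deformation retract each of them onto $\Phi^{-1}(t)$, and then glue the two homotopies along $\Phi^{-1}(t)$ using the pasting lemma. This mirrors the proof of Proposition~\ref{prop:euler_middle_cont}, where the band is decomposed in exactly this way and each half is shown to contribute nothing to the Euler characteristic; here the task is to upgrade ``Euler characteristic $0$ relative to $\Phi^{-1}(t)$'' to ``deformation retracts onto $\Phi^{-1}(t)$.''

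First I would record the two halves separately. Since $K$ is compact and $\Phi$ continuous, $\Phi^{-1}([t,+\infty))$, $\Phi^{-1}((-\infty,t])$, and $\Phi^{-1}(t)$ are all compact. The proof of Theorem~\ref{thm:homotopy_K}(\ref{sub:continuous_definable}) already shows, for $\delta>0$ sufficiently small, that $\Phi^{-1}([t,t+\delta])$ deformation retracts onto $\Phi^{-1}(t)$; crucially, the retraction constructed there comes from a homeomorphism $\Phi^{-1}([t,t+\delta])\cong M_g$ onto a mapping cylinder that is the identity on $\Phi^{-1}(t)$, so it is a \emph{strong} deformation retraction fixing $\Phi^{-1}(t)$ pointwise for all time. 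Applying the very same argument to the continuous definable function $-\Phi$ and the value $-t$ --- noting $(-\Phi)^{-1}([-t,-t+\delta])=\Phi^{-1}([t-\delta,t])$ and $(-\Phi)^{-1}(-t)=\Phi^{-1}(t)$, and that $(-\Phi)^{-1}([-t,+\infty))=\Phi^{-1}((-\infty,t])$ is compact --- gives, for $\delta>0$ sufficiently small, a strong deformation retraction of $\Phi^{-1}([t-\delta,t])$ onto $\Phi^{-1}(t)$. Taking $\delta$ below both thresholds, I would then have strong deformation retractions $H^{+}$ of $\Phi^{-1}([t,t+\delta])$ and $H^{-}$ of $\Phi^{-1}([t-\delta,t])$, each onto $\Phi^{-1}(t)$ and each equal to the identity homotopy on $\Phi^{-1}(t)$.

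Next I would glue. The two closed pieces $\Phi^{-1}([t-\delta,t])$ and $\Phi^{-1}([t,t+\delta])$ cover $\Phi^{-1}([t-\delta,t+\delta])$ and meet precisely in $\Phi^{-1}(t)$, on which $H^{+}$ and $H^{-}$ agree (both are the identity). Hence $\Phi^{-1}([t-\delta,t])\times[0,1]$ and $\Phi^{-1}([t,t+\delta])\times[0,1]$ are closed in $\Phi^{-1}([t-\delta,t+\delta])\times[0,1]$, cover it, and agree on their intersection $\Phi^{-1}(t)\times[0,1]$; by the pasting lemma \citep[][Theorem 18.3]{Munkres_2014} the common extension $H$ is a well-defined continuous map $\Phi^{-1}([t-\delta,t+\delta])\times[0,1]\to\Phi^{-1}([t-\delta,t+\delta])$. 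A quick check gives $H(\cdot,0)=\mathrm{id}$, $H(\cdot,1)$ with image $\Phi^{-1}(t)$, and $H(x,s)=x$ for $x\in\Phi^{-1}(t)$, so $H$ witnesses the desired (strong) deformation retraction.

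The hard part is the gluing step: it genuinely requires that each half retract onto $\Phi^{-1}(t)$ \emph{strongly}, so that the two homotopies literally coincide on the closed overlap $\Phi^{-1}(t)$ --- otherwise the pasting lemma does not apply. This is exactly what the mapping-cylinder construction in the proof of Theorem~\ref{thm:homotopy_K} delivers. An alternative that avoids the two-sided bookkeeping entirely is to apply Lemma~\ref{lem:local_conic_structure} directly to the nonnegative continuous definable function $f=|\Phi-t|$ on $K$, with $Z=f^{-1}(0)=\Phi^{-1}(t)$ and $f^{-1}([0,\delta])=\Phi^{-1}([t-\delta,t+\delta])$, and then rerun the mapping-cylinder argument of Theorem~\ref{thm:homotopy_K} verbatim; here the only thing left to verify is that $|\Phi-t|$ is definable, which holds because the o-minimal structure contains the semialgebraic graph of $y\mapsto|y|$ and compositions of definable functions are definable.
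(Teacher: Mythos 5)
Your proof is correct and follows essentially the same route as the paper's: split the band into $\Phi^{-1}([t-\delta,t])$ and $\Phi^{-1}([t,t+\delta])$, obtain a deformation retraction onto $\Phi^{-1}(t)$ for each half (the right half directly from the proof of Theorem~\ref{thm:homotopy_K}, the left half by rerunning that argument on the reflected function), and glue the two homotopies via the pasting lemma. Your explicit remark that both retractions must be \emph{strong} --- fixing $\Phi^{-1}(t)$ pointwise, as the mapping-cylinder construction indeed ensures --- makes precise a point the paper leaves implicit, and your alternative of applying Lemma~\ref{lem:local_conic_structure} once to $f=|\Phi-t|$ is a clean shortcut that avoids the two-sided gluing entirely.
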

Corollary~\ref{thm:compact_middle_cont} is significant from an application viewpoint. Suppose a computer program is tasked to find the Betti numbers $\beta_k$ (or any other homotopy invariant) of the level sets $\Phi^{-1}(t)$ in the setup of Corollary~\ref{thm:compact_middle_cont}. Due to error and imprecision in practical applications, a computer program would typically only compute $\beta_k(\Phi^{-1}([t-\delta, t+\delta])$ with some margin of error $\delta > 0$. Corollary~\ref{thm:compact_middle_cont} thus guarantees that the computation would converge to the desired value as $\delta$ gets small, that is, 
\[\lim_{\delta \to 0^+} \beta_k\left(\Phi^{-1}([t-\delta, t+\delta])\right) = \beta_k\left(\Phi^{-1}(t)\right).\]
Similar to the discussions in Remark~\ref{rem: only_proper}, Corollary~\ref{thm:compact_middle_cont} may also be generalized to a (not necessarily compact) definable set and a continuous proper real-valued definable function on it. 

\begin{proof}[Proof of Corollary~\ref{thm:compact_middle_cont}]
    In the proof of Theorem~\ref{thm:homotopy_K}, we showed that $\Phi^{-1}([t, t+\delta])$ deformation retracts onto $\Phi^{-1}(t)$ for all $\delta > 0$ sufficiently small. It suffices for us to show that $\Phi^{-1}([t-\delta, t])$ also deformation retracts onto $\Phi^{-1}(t)$ for all $\delta > 0$ sufficiently small. Following this statement, we can use the ``pasting lemma" \citep[][Theorem 18.3]{Munkres_2014} to combine both deformation retracts to a deformation retract of $\Phi^{-1}([t-\delta, t+\delta])$ onto $\Phi^{-1}(t)$.

    Consider the continuous definable non-negative function $g: \Phi^{-1}((-\infty, t]) \to \bb R$ given by $g(x) = -\Phi(x) + t$. By the exact same arguments as in the proof of Theorem~\ref{thm:homotopy_K}, we can again apply Lemma~\ref{lem:local_conic_structure} with $S = \Phi^{-1}((-\infty, t])$, $Z = g^{-1}(0) = \Phi^{-1}(t)$, and $g^{-1}([0, \delta]) = \Phi^{-1}([t-\delta, t])$ to show that $\Phi^{-1}([t-\delta, t])$ deformation retracts onto $\Phi^{-1}(t)$ for all $\delta > 0$ sufficiently small. This concludes the proof of the corollary.
\end{proof}

\section{Applications to Topological Data Analysis}\label{section: Applications to the Smooth Euler Characteristic Transform}

In this section, we utilize the results in previous sections to study several Euler characteristic-based shape descriptors in TDA. Specifically, we investigate the following:
\begin{enumerate}
    \item The relationship between the ECT \citep{turner2014persistent, ghrist2018persistent, curry2022many} and the SECT \citep{crawford2020predicting, meng2022randomness}.
    \item The relationship between the ERT and the smooth Euler-Radon transform \citep[SERT,][]{meng2023Inference}.
\end{enumerate} 

Motivated by the SECT, \cite{meng2023Inference} introduced the SERT by smoothing the ERT via Lebesgue integration as follows: for any bounded, definable, and compactly supported function $g:\mathbb{R}^n\rightarrow\mathbb{R}$ satisfying $\operatorname{supp}(g) \subseteq B(0,R)$ and
\begin{align}\label{eq: supp(g) is strictly smaller than B}
    \operatorname{dist}\Big(\operatorname{supp}(g),\, \partial B(0,R)\Big) := \inf\Big\{\Vert x - y \Vert: x\in \operatorname{supp}(g) \text{ and }y\in \partial B(0,R)\Big\} > 0,
\end{align} 
the SERT of $g$ is defined as $\operatorname{SERT}(g):=\{\operatorname{SERT}(g)(v, t): (v,t)\in\mathbb{S}^{n-1}\times\mathbb{R}\}$, where
\begin{align}\label{eq: def of the SERT}
    \operatorname{SERT}(g)(v,t):= \int_{-R}^t \operatorname{ERT}(g)(v,\tau)\,d\tau - \frac{t+R}{2R}\int_{-R}^R \operatorname{ERT}(g)(v,\tau)\,d\tau,
\end{align}
for all $(v, t) \in \mathbb{S}^{n-1}\times [-R,\,R]$. Theorem~\ref{thm:ERT_right_continuous} ensures that the Lebesgue integrals in Equation~\eqref{eq: def of the SERT} are well defined. The SERT converts grayscale image-valued data (e.g., computerized tomography scans of tumors) into functional data. Equation~\eqref{eq: the ECT is a special case of the ERT} implies 
\begin{align}\label{eq: ECT VS ert, SECT vs SERT}
    \operatorname{ERT}(\mathbbm{1}_K) = \operatorname{ECT}(K) \ \ \ \text{ and }\ \ \ \operatorname{SERT}(\mathbbm{1}_K) = \operatorname{SECT}(K)
\end{align}
for any definable compact $K\subseteq B(0,R)$. Therefore, to investigate the relationship between the ECT and SECT, it suffices to investigate the relationship between the ERT and SERT, which is precisely described by the following theorem.
\begin{thm}\label{thm: ERT vs. SERT}
    Suppose $g:\mathbb{R}^n\rightarrow\mathbb{R}$ is a bounded, definable, and compactly supported function satisfying $\operatorname{supp}(g) \subseteq B(0,R)$ and Equation~\eqref{eq: supp(g) is strictly smaller than B}. Then, $\operatorname{SERT}(g)=\{\operatorname{SERT}(g)(v,t):\,(v,t)\in\mathbb{S}^{n-1}\times[-R,R]\}$ uniquely determines $\operatorname{ERT}(g)=\{\operatorname{ERT}(g)(v,t):\,(v,t)\in\mathbb{S}^{n-1}\times[-R,R]\}$. Hence, $\operatorname{SERT}(g)$ and $\operatorname{ERT}(g)$ uniquely determine each other.
\end{thm}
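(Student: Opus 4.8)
The plan is to invert the definition of the SERT in Equation~\eqref{eq: def of the SERT} by recognizing the right-hand side as a ``Brownian bridge''-type transform of the function $\tau\mapsto\operatorname{ERT}(g)(v,\tau)$ and then recovering the primitive, and finally the integrand itself, using the right continuity established in Theorem~\ref{thm:ERT_right_continuous}. Concretely, fix $v\in\mathbb{S}^{n-1}$ and write $E(t):=\operatorname{ERT}(g)(v,t)$ and $A(t):=\int_{-R}^t E(\tau)\,d\tau$, so that Equation~\eqref{eq: def of the SERT} reads $\operatorname{SERT}(g)(v,t)=A(t)-\frac{t+R}{2R}A(R)$. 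First I would observe that $E$ vanishes for $t$ near $-R$: by Equation~\eqref{eq: supp(g) is strictly smaller than B}, for $t$ close enough to $-R$ the hyperplane slab $\{x\cdot v\le t\}$ misses $\operatorname{supp}(g)$, so $g\cdot\mathbbm{1}_{X^v_t}\equiv 0$ and hence $E(t)=0$; symmetrically, $E(t)=\operatorname{ERT}(g)(v,R)$ is a fixed constant for $t$ near $R$. In particular $A$ is $C^1$ near the endpoints with the correct boundary behavior, and the map $t\mapsto\operatorname{SERT}(g)(v,t)$ is genuinely well-defined and continuous on $[-R,R]$.

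The recovery then proceeds in two steps. \emph{Step 1 (recover the constant $A(R)$).} Evaluating Equation~\eqref{eq: def of the SERT} at $t=R$ gives $\operatorname{SERT}(g)(v,R)=A(R)-A(R)=0$, which is uninformative, so instead I would extract $A(R)=\int_{-R}^R E(\tau)\,d\tau$ from the behavior of $\operatorname{SERT}(g)(v,t)$ near an endpoint: since $E\equiv 0$ on $[-R,-R+\eta]$ for some $\eta>0$, we have $A(t)=0$ there, hence $\operatorname{SERT}(g)(v,t)=-\frac{t+R}{2R}A(R)$ on that interval, and so $A(R)=-2R\cdot\lim_{t\to(-R)^+}\frac{\operatorname{SERT}(g)(v,t)}{t+R}$ is determined by $\operatorname{SERT}(g)$ (alternatively one reads it off from the slope at $t=-R$). \emph{Step 2 (recover $A$, then $E$).} With $A(R)$ in hand, Equation~\eqref{eq: def of the SERT} immediately yields $A(t)=\operatorname{SERT}(g)(v,t)+\frac{t+R}{2R}A(R)$ for every $t$, so the primitive $A$ is completely determined by $\operatorname{SERT}(g)$. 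Now $E$ is, by Lemma~\ref{lem:SELECT_LECT_tame_property} together with the representation in Equation~\eqref{eq: equation to express ERT}, a bounded function of $t$ with only finitely many discontinuities that is constant between consecutive discontinuities, and by Theorem~\ref{thm:ERT_right_continuous} it is right continuous. Since $A(t)=\int_{-R}^t E(\tau)\,d\tau$, the fundamental theorem of calculus recovers $E(t)=A'(t)$ at every point of continuity of $E$, i.e.\ for all $t$ outside a finite set $D_v$. At a point $t_0\in D_v$, right continuity of $E$ and the fact that $E$ is constant on some interval $(t_0,t_0+\epsilon)$ give $E(t_0)=\lim_{t\to t_0^+}E(t)=\lim_{t\to t_0^+}A'(t)$, the latter one-sided derivative existing and being computable from $A$ alone; equivalently $E(t_0)=\lim_{h\to 0^+}\frac{A(t_0+h)-A(t_0)}{h}$. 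Hence $E(t)=\operatorname{ERT}(g)(v,t)$ is determined for all $t\in[-R,R]$, and since $v$ was arbitrary, $\operatorname{SERT}(g)$ determines $\operatorname{ERT}(g)$.

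Finally, the reverse implication---that $\operatorname{ERT}(g)$ determines $\operatorname{SERT}(g)$---is immediate from Equation~\eqref{eq: def of the SERT}, since the right-hand side is an explicit Lebesgue-integral expression in $\tau\mapsto\operatorname{ERT}(g)(v,\tau)$; one only needs that $\tau\mapsto\operatorname{ERT}(g)(v,\tau)$ is measurable and bounded, which follows from Lemma~\ref{lem:SELECT_LECT_tame_property} and the representation in Equation~\eqref{eq: equation to express ERT}. This gives the asserted mutual determination.

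\textbf{Main obstacle.} The only real subtlety is Step~2 at the finitely many jump points: one must be sure that the Radon--Nikodym/FTC argument pins down $E$ \emph{exactly everywhere} rather than merely almost everywhere, and this is precisely where the right continuity from Theorem~\ref{thm:ERT_right_continuous} (rather than a merely measure-theoretic inversion) is essential---without it, the value of $\operatorname{ERT}(g)(v,t)$ on the null set of jump points would be irrecoverable, exactly the phenomenon flagged in the introduction for the ECT--SECT pair. Everything else is bookkeeping with the endpoint behavior coming from Equation~\eqref{eq: supp(g) is strictly smaller than B}.
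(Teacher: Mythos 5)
Your proposal is correct and follows essentially the same route as the paper: pin down the bridge constant $\int_{-R}^R \operatorname{ERT}(g)(v,\tau)\,d\tau$ from the behavior of $\operatorname{SERT}(g)(v,t)$ near $t=-R$ (using Equation~\eqref{eq: supp(g) is strictly smaller than B}), differentiate to recover $\operatorname{ERT}(g)(v,t)$ for almost every $t$, and invoke the right continuity from Theorem~\ref{thm:ERT_right_continuous} to extend to the exceptional null set. The only cosmetic difference is that you explicitly reconstruct the primitive $A(t)$ before differentiating, whereas the paper differentiates Equation~\eqref{eq: def of the SERT} directly and takes $t\to -R$ in the derivative to extract the same constant.
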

\noindent Theorem \ref{thm: ERT vs. SERT} implies that the smoothing procedure in Equation~\eqref{eq: def of the SERT} via Lebesgue integration preserves all the information within $\operatorname{ERT}(g)$. Furthermore, Theorem \ref{thm: ERT vs. SERT}, together with Equation~\eqref{eq: ECT VS ert, SECT vs SERT}, implies that the transition from the ECT to the SECT via Equation~\eqref{eq: def of SECT(S)(v,t)} is invertible. Precisely, we have the following corollary of Theorem \ref{thm: ERT vs. SERT}.
\begin{cor}\label{cor: the ECT and SECT determine each other}
    Suppose $K\subseteq\mathbb{R}^n$ is compact, definable, and bounded by the open ball $B(0,R)$. Then, $\operatorname{SECT}(K)=\{\operatorname{SECT}(K)(v,t):\,(v,t)\in\mathbb{S}^{n-1}\times[-R,R]\}$ uniquely determines $\operatorname{ECT}(K)=\{\operatorname{ECT}(K)(v,t):\,(v,t)\in\mathbb{S}^{n-1}\times[-R,R]\}$. Hence, $\operatorname{SECT}(K)$ and $\operatorname{ECT}(K)$ uniquely determine each other.
\end{cor}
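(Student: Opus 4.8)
The plan is to reduce the corollary to Theorem~\ref{thm: ERT vs. SERT} by way of Equation~\eqref{eq: ECT VS ert, SECT vs SERT}. Given a compact definable $K\subseteq B(0,R)$, its indicator function $\mathbbm{1}_K$ is a bounded, definable, compactly supported function with $\operatorname{supp}(\mathbbm{1}_K)=\overline{K}=K$, so $\operatorname{supp}(\mathbbm{1}_K)\subseteq B(0,R)$; the hypothesis that $K$ is ``bounded by the open ball $B(0,R)$'' should be read as $K\subseteq B(0,R)$ compact, which by compactness of $K$ and openness of $B(0,R)$ forces $\operatorname{dist}(K,\partial B(0,R))>0$, i.e.\ Equation~\eqref{eq: supp(g) is strictly smaller than B} holds for $g=\mathbbm{1}_K$. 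Thus $g=\mathbbm{1}_K$ satisfies all the hypotheses of Theorem~\ref{thm: ERT vs. SERT}.

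Next I would invoke Equation~\eqref{eq: ECT VS ert, SECT vs SERT}: $\operatorname{ERT}(\mathbbm{1}_K)=\operatorname{ECT}(K)$ and $\operatorname{SERT}(\mathbbm{1}_K)=\operatorname{SECT}(K)$, both as functions on $\mathbb{S}^{n-1}\times[-R,R]$. Applying Theorem~\ref{thm: ERT vs. SERT} to $g=\mathbbm{1}_K$, the data $\operatorname{SERT}(\mathbbm{1}_K)$ uniquely determines $\operatorname{ERT}(\mathbbm{1}_K)$, and conversely. Translating through the two identities, $\operatorname{SECT}(K)$ uniquely determines $\operatorname{ECT}(K)$, and conversely. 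The reverse direction (that $\operatorname{ECT}(K)$ determines $\operatorname{SECT}(K)$) is in fact immediate and does not need Theorem~\ref{thm: ERT vs. SERT} at all, since $\operatorname{SECT}(K)(v,t)$ is defined by Equation~\eqref{eq: def of SECT(S)(v,t)} as an explicit Lebesgue integral of $\tau\mapsto\chi(S^v_\tau)=\operatorname{ECT}(K)(v,\tau)$; I would mention this for completeness but the content is entirely in the forward direction.

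The only subtlety—and the step I expect to be the main (minor) obstacle—is the bookkeeping around the domain of definition and the boundedness hypothesis: one must confirm that the $R$ in Corollary~\ref{cor: the ECT and SECT determine each other} can serve as the $R$ in Theorem~\ref{thm: ERT vs. SERT}, which requires the strict separation in Equation~\eqref{eq: supp(g) is strictly smaller than B} rather than merely $K\subseteq B(0,R)$; this is where compactness of $K$ is used, via the standard fact that a compact set and a disjoint closed set in $\mathbb{R}^n$ are at positive distance (here the closed set is $\partial B(0,R)$, disjoint from $K$ since $K\subseteq B(0,R)$). Once that is noted, the corollary is a direct specialization of Theorem~\ref{thm: ERT vs. SERT}, so the proof is essentially one line of reduction plus this verification. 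I would write it as: observe $\mathbbm{1}_K$ satisfies the hypotheses of Theorem~\ref{thm: ERT vs. SERT} (citing compactness for the distance condition), then apply that theorem and Equation~\eqref{eq: ECT VS ert, SECT vs SERT}.
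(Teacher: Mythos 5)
Your proposal is correct and is exactly the paper's intended argument: Corollary~\ref{cor: the ECT and SECT determine each other} is deduced by applying Theorem~\ref{thm: ERT vs. SERT} to $g=\mathbbm{1}_K$ and then translating through Equation~\eqref{eq: ECT VS ert, SECT vs SERT}. Your added verification that compactness of $K$ together with $K\subseteq B(0,R)$ yields the strict separation in Equation~\eqref{eq: supp(g) is strictly smaller than B} is the right (and only) hypothesis check, and it is precisely what the paper leaves implicit.
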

\noindent Corollary \ref{cor: the ECT and SECT determine each other}, together with the success of the ECT in applications \citep[e.g.,][]{turner2014persistent, wang2021statistical}, justifies the utilization of the SECT in sciences \citep{crawford2020predicting, marsh2022detecting, meng2022randomness}. In addition, theoretical results on the ECT \citep{ghrist2018persistent, curry2022many} can be applied to the SECT via Corollary \ref{cor: the ECT and SECT determine each other}.

We employ techniques that were previously implemented by \cite{meng2023Inference} in the subsequent proof.
\begin{proof}[Proof of Theorem \ref{thm: ERT vs. SERT}]
We arbitrarily choose a direction $v\in\mathbb{S}^{n-1}$ and fix it. Equation~\eqref{eq: supp(g) is strictly smaller than B} implies that $\ERT(g)(v, t)=0$ and $\operatorname{SERT}(g)(v,t)=-\frac{t+R}{2R}\int_{-R}^R \operatorname{ERT}(g)(v,\tau)\,d\tau$ when $-R <t< -R+\operatorname{dist}\left(\operatorname{supp}(g),\, \partial B(0,R)\right)$. Equation~\eqref{eq: def of the SERT} implies that the following equation holds almost everywhere
    \begin{align}\label{eq: derivative of the SERT}
        \frac{d}{dt}\operatorname{SERT}(g)(v,t)=\ERT(g)(v, t) + \left\{-\frac{1}{2R} \int_{-R}^R \ERT(g)(v, \tau)\,d\tau \right\}.
    \end{align}
    That is, there exists a measurable subset $N$ of $\mathbb{R}$ with Lebesgue measure zero such that Equation~\eqref{eq: derivative of the SERT} holds for all $t\notin N$. Hence,
    \begin{align*}
        \lim_{t\rightarrow-R}\frac{d}{dt}\operatorname{SERT}(g)(v,t)=\left\{-\frac{1}{2R} \int_{-R}^R \ERT(g)(v, \tau)\,d\tau \right\},
    \end{align*}
    which implies
    \begin{align}\label{eq: represent the ECT via SECT}
        \ERT(g)(v, t)=\frac{d}{dt}\operatorname{SERT}(g)(v,t)-\lim_{t\rightarrow-R}\frac{d}{dt}\operatorname{SERT}(g)(v,t),\ \ \text{ for all }t\notin N.
    \end{align}
    The right continuity of $t\mapsto \operatorname{ERT}(g)(v,t)$ implies that Equation~\eqref{eq: represent the ECT via SECT} recovers the values of $\ERT(g)(v, t)$ for $t\in N$. Specifically, for each $t\in N$, there exists a sequence $\{t_n\}_{n\ge1}$ such that
    \begin{itemize}
        \item $t_n>t$ for all $n\ge1$,
        \item $t_n \notin N$ for all $n\ge 1$, and
        \item $t_n\rightarrow t$ as $n\rightarrow\infty$,
    \end{itemize}
    since $N$ has Lebesgue measure zero. Then, by the right continuity of $t\mapsto \operatorname{ERT}(g)(v,t)$, we have
     \begin{align}\label{eq: the values in N through the right continuity}
         \operatorname{ERT}(g)(v,t) = \lim_{n\rightarrow\infty} \operatorname{ERT}(g)(v,t_n), \quad \text{where }t\in N,
     \end{align}
     where each $\operatorname{ERT}(g)(v,t_n)$ is determined by Equation~\eqref{eq: represent the ECT via SECT}. Therefore, the $\operatorname{ERT}(g)$ is represented by $\operatorname{SERT}(g)$ through Equation~\eqref{eq: represent the ECT via SECT} and \eqref{eq: the values in N through the right continuity}.
\end{proof}

\section{Discussion}\label{section: Discussion}

In this article, we studied definable sublevel sets from both pure perspectives---Euler characteristics and homotopy types---and applied viewpoints---topological descriptors developed in the TDA literature. Our results contribute to the future probabilistic development of TDA. For example, the function $t \mapsto \chi(S^v_t)$ becomes a stochastic process for each fixed $v\in\mathbb{S}^{n-1}$ if the shape-valued data $S$ is viewed as random. Then, our Theorem \ref{thm:ECT_right_continuous}, combined with Lemma \ref{lem:general_tame_property}, guarantees that the sample paths of this process are right-continuous with left limits (càdlàg). The theory of stochastic processes with càdlàg sample paths has been extensively studied \citep[][Section 21.4]{klenke2013probability}, paving the way for the examination of the probabilistic properties of the topological descriptor ECT.

\subsection*{Acknowledgements} M.J. would like to thank Dr.~Mark Ainsworth and Dr.~Paul Dupuis for helpful discussions and comments, Dr.~Thomas Goodwillie for helpful conversations on piecewise linear topology, and both Dr.~Nicole Looper and Dr.~Richard Schwartz for helpful conversations and advice on the contexts of this paper. K.M. would like to thank Dr.~Ningchuan Zhang for helpful conversations on Morse theory. We thank Dr.~Pierre Schapira for helpful conversations on microlocal sheaf theory and constructible functions. We also thank Dr.~Vadim Lebovici for helpful conversations on Euler calculus and real analytic geometry, especially for providing references to Remark~\ref{rem: known_result_analytic} and Remark~\ref{rem: only_proper}. We also thank Kexin Ding for helpful questions and comments on Section~\ref{section: Right continuity} and~\ref{section: Homotopy Types}, for the partial writings of what is now Proposition~\ref{prop:trivialization}, Example~\ref{exmp::ECT_not_left_cont}, and Example~\ref{exmp:counter_homotopy_non_compact}, for partially organizing the references, and for participating in the making of Figure~\ref{fig:sublevel_set} and Figure~\ref{fig:mapping_cylinder}.

\section*{Declarations}

\subsection*{Ethical Approval}

Not applicable.

\subsection*{Competing Interests}

The authors declare no competing interests.

\subsection*{Author Contributions}

K.M. conceived and supervised the study. K.M. asked the original question, now formulated as Theorem~\ref{thm:ECT_right_continuous}, that M.J. proved and generalized upon. K.M. and M.J. wrote the introduction. M.J. made the primary contributions to the writing of Section~\ref{section: Background: O-minimal Structures and Euler Calculus}, the proofs of the results in Section~\ref{section: Right continuity}, and the proofs of the results in Section~\ref{section: Homotopy Types}. K.M. made the primary contributions to the proofs in Section~\ref{section: Applications to the Smooth Euler Characteristic Transform} and wrote Section~\ref{section: Discussion}. M.J. wrote the Appendix. All authors reviewed and revised the writings in the manuscript.

\subsection*{Funding}

None.

\subsection*{Availability of Data and Materials}

Not applicable.
\appendix
\section{Additional Content}

\subsection{Proof of Lemma~\ref{lem:local_conic_structure}}\label{subsec:proof_local_conic} Our proof will also use concepts in triangulations and simplicial complexes. We refer the reader to \cite{Rourke_Sanderson_1982} for a thorough treatment of the subject. For our purposes, we denote the geometric (closed) simplex $[a_0, ..., a_d]$ as the convex hull of affinely independent points $a_0, ..., a_d$, i.e.,
    \[[a_0, ..., a_d] = \left\{x \in \bb R^n: x = \lambda_0 a_0 + ... + \lambda_d a_d,\, \sum_{i = 0}^d \lambda_i = 1, \lambda_i \in [0, 1] \right\}. \]

We will now prove Lemma~\ref{lem:local_conic_structure} as follows.
\begin{proof}[Proof]
We follow the hints outlined in the exercise. By the triangulation theorem for definable functions \citep[][Theorem 2]{Coste1998}, there exists a finite simplicial complex $L_S$ with a definable homeomorphism $\rho: |L_S| \to S$ from the polyhedron of $L_S$ to $S$ such that $f \circ \rho$ is linear on each simplex of $L_S$. Furthermore, this triangulation may be chosen so that $Z$ is definably homeomorphic to $|L_Z|$, where $L_Z$ is a full subcomplex of $L_S$. 

We outline the proof in several steps:
\begin{enumerate}
    \item For $(x, t) \in f^{-1}(\delta) \times [0, \delta]$, we will construct $h(x, t)$ and show it is well-defined in this section. Since $L_S$ is a finite triangulation, we may choose $\delta > 0$ sufficiently small that any vertex $v$ of $L_S$ that is not in $L_Z$ satisfies $f(\rho(v)) > \delta$.
\begin{enumerate}
    \item Let $x' = \rho^{-1}(x)$ denote the element $x$ on $|L_S|$.
    \item  By the construction of $\delta$, $x'$ is contained in a simplex $[a_0, ..., a_d]$ such that $x' = \sum_{i = 0}^d \lambda_i a_i$ in barycentric coordinates. We may assume without loss that $a_i \in L_Z$ for $i = 0, ..., k$ and $a_i \notin L_Z$ for $i = k + 1, ..., d$. 
    \item Let $\alpha = \sum_{i = 0}^k \lambda_i$. We claim that $0 < \alpha < 1$. Indeed, if $\alpha = 0$, since each $\lambda_i \geq 0$, this means that $x'$ is contained in the simplex $[a_{k+1}, ..., a_d]$. But this would mean that $f(\rho(x')) = f(x) > \delta$. On the other hand, if $\alpha = 1$, then this would mean similarly that $x'$ is contained in the simplex $[a_{1}, ..., a_k]$, so $f(\rho(x')) = f(x) = 0$. Thus, we conclude that $0 < \alpha < 1$.
    \item Now consider the point $q(x') = \sum_{i = 0}^k \left(\frac{\lambda_i}{\alpha}\right) a_i$ in the simplex $[a_0, ..., a_k] \subseteq [a_0, ..., a_d]$. Since a (geometric) simplex is convex, we may define a line from $x'$ to $q(x')$. We then define $h(x, t) = \rho(y')$, where $y'$ is the point on the line such that $f(\rho(y')) = t$.
    \item We note that such $y'$ must exist and is uniquely determined by $x'$ and $q(x')$. Indeed, this is because $f \circ \rho$ is a linear function on the simplex $[a_0, ..., a_d]$. Hence,
    \[f \circ \rho(q(x')) = \sum_{i = 0}^k \frac{\lambda_i}{\alpha} f \circ \rho(a_i) = \sum_{i = 0}^k 0 = 0. \]
    We also know that $f \circ \rho(x') = f(x) = \delta$. Since our function is continuous, such $y'$ must exist. Furthermore, this $y'$ is uniquely determined by $x'$ and $q(x')$ since our function is linear.
    \item We also need to check that $h(x, t)$ is independent of which simplex $x'$ is contained in. Indeed, suppose $x'$ is contained in the simplex $[a_0, ..., a_d]$ and $[b_0, ..., b_e]$, then $x' \in [a_0, ..., a_d] \cap [b_0, ..., b_e]$, which is also a simplex spanned by some common vertices in $\{a_0, ..., a_d\} \cap \{b_0, ..., b_e\}$. Let's say $x' \in \{c_0, ..., c_f\}$.
    
    Let $q_a(x'), q_b(x'), q_c(x')$ be the correspondent point in $Z$ given by each of the three vertex sets. We see that $q_a(x') = q_c(x') = q_b(x')$. This is because, if we were to write $x' = \sum_{i = 0}^d a_i \lambda_i = \sum_{j = 0}^e b_j \xi_j$ in terms of the barycentric coordinates of $[a_0, ..., a_d]$ and $[b_0, .., b_e]$, the vertices where $a_i$ is non-zero and the vertices where $b_j$ is non-zero are all in $\{c_0, ..., c_f\}$.
\end{enumerate}
    \item Now that we have constructed $h(x, t)$. We will verify Lemma~\ref{lem:local_conic_structure}(\ref{sub:perserve_t}) and Lemma~\ref{lem:local_conic_structure}(\ref{sub:perserve_end}):
\begin{itemize}
    \item For Lemma~\ref{lem:local_conic_structure}(\ref{sub:perserve_t}), clearly by definition $f(h(x, t)) = f(\rho(y')) = t$.
    \item For Lemma~\ref{lem:local_conic_structure}(\ref{sub:perserve_end}), clearly $f(x) = f(\rho(x')) = \delta$ and $x'$ is on the line between $x'$ and $q(x')$. Thus, $h(x, \delta) = x$.
\end{itemize}

\item Now we will check that $h(x,t)$ is continuous definable. We first define a natural map $Q: f^{-1}(\delta) \to f^{-1}([0, \delta])$ such that $Q(x) = \rho(q(\rho^{-1}(x))) = \rho(q(x'))$ , where $q(x')$ is the same well-defined element we specified in the construction of $h(x, t)$.

To show that $Q$ is continuous and definable, it suffices for us to show that $q: \rho^{-1}(f^{-1}(\delta)) \to \rho^{-1}(f^{-1}([0, \delta]))$ is continuous and definable.

We can show that $q$ is a continuous definable function by considering the Pasting Lemma \citep[][Theorem 18.3]{Munkres_2014} on the restriction of $q$ on each of $[a_0, ..., a_d] \cap \rho^{-1}(f^{-1}(\delta))$. Write $x' = \sum_{i = 1}^d \lambda_i a_i$. In terms of barycentric coordinate, the map $x' \mapsto q(x')$ is just $(\lambda_1, ..., \lambda_d) \mapsto (\frac{\lambda_1}{\alpha}, ..., \frac{\lambda_k}{\alpha}, 0, ..., 0)$, which is clearly continuous and definable.

Let $i: f^{-1}(\delta) \to f^{-1}([0, \delta])$ be the standard inclusion map. We note that $h(x, t): f^{-1}(\delta) \times [0, \delta] \to f^{-1}([0, \delta])$ is the ``straight line" homotopy between the functions $i$ and $Q$ (Technically, $\rho^{-1} \circ h$ is a straight line homotopy between $\rho^{-1} \circ i$ and $\rho^{-1} \circ Q$ on $|L_Z|$ under the definable homeomorphism $\rho$). Thus, $h(x, t)$ is also a continuous definable function.

\item Finally, we will prove Lemma~\ref{lem:local_conic_structure}(\ref{sub:homeomorphism}). We first want to prove that $h$ is bijective on $f^{-1}(\delta) \times (0, \delta] \to f^{-1}((0, \delta])$.

Indeed, let $y \in f^{-1}((0, \delta])$, since $0 < f(y) = t_0 \leq \delta$, we can find a simplex $[a_0, ..., a_d]$ containing $y' \coloneqq \rho^{-1}(y)$.

We will write $y' = \sum_{i = 1}^d r_i a_i$ in terms of barycentric coordinates. We wish to find an element $x' \in [a_0, ..., a_d]$ such that $y'$ lies between $x'$ and $q(x')$. This will prove surjectivity.

Indeed, consider an arbitrary element $x' = \sum_{i = 1}^d \lambda_i a_i$. The condition that $y'$ lies between $x'$ and $q(x')$ is true if and only if, in terms of barycentric coordinates,
\begin{equation}\label{eq:h_inverse}
  (\lambda_1, ..., \lambda_d) + \frac{\delta - t_0}{\delta} \left(\frac{\lambda_1}{\alpha} - \lambda_1, ..., \frac{\lambda_k}{\alpha} - \lambda_k, -\lambda_{k+1}, ..., -\lambda_d \right) = (r_1, ..., r_d).
\end{equation}

We then see that this uniquely determines the coefficients $\lambda_1, ..., \lambda_d$. The fact that $f(y) = f(\rho(y')) > 0$ is crucial here. If $f(\rho(y')) = 0$, then the coefficients $\lambda_{k+1}, ..., \lambda_d$ cannot be determined.

This also proves injectivity since if there's any $z', q(z')$ in $[b_0, ..., b_e]$ such that $y'$ is also in the line segment between $z'$ and $q(z')$. Then $y' \in [a_0, ..., a_d] \cap [b_0, ..., b_e] = [c_0, ..., c_f]$. Suppose for contradiction that $z'$ is not in $[c_0, ..., c_f]$, then the line from $z'$ to $q(z')$ would not be contained in $[b_0, ..., b_e]$. Thus $z'$ must be an element of $[a_0, ..., a_d]$, which forces it to have the same coefficients $\lambda_1, ..., \lambda_d$. Thus, we conclude that $h$ is bijective on $f^{-1}(\delta) \times (0, \delta]$.

\item Now consider the inverse of $h$ on $f^{-1}((0, \delta])$. We can show $h^{-1}$ is continuous by showing $h^{-1} \circ \rho$ is continuous on $\rho^{-1}(f^{-1}((0, \delta]))$. For ease of notation, we will call this space $X = \rho^{-1}(f^{-1}((0, \delta]))$.

We can verify this using the Pasting Lemma \citep[][Theorem 18.3]{Munkres_2014} on the intersection of $X$ with each simplex $[a_0, ..., a_d]$. Now $h^{-1} \circ \rho$ restricted to $[a_0, ..., a_d] \cap X$ is a function from $[a_0, ..., a_d] \cap X \to f^{-1}(\delta) \times (0, \delta]$.

By the definition of product topology, it suffices for us to verify that the map is coordinate-wise continuous. The composition of $h^{-1} \circ \rho$ and projection to $(0, \delta]$ is clearly continuous because this is the same as the map $f \circ \rho$. Indeed, for any $y' \in [a_0, ..., a_d] \cap X$ such that $\rho(y') = h(x, t)$,
\[f \circ \rho(y') = f(\rho(y')) = f(h(x, t)) = t, \quad \text{by Lemma~\ref{lem:local_conic_structure}(\ref{sub:perserve_t})}\]

As for the first coordinate $f^{-1}(\delta)$. For any $y' = \sum_{i = 1}^d r_i a_i \in [a_0, ..., a_d] \cap X$ such that $\rho(y') = h(x, t)$, Equation~(\ref{eq:h_inverse}) shows that the map from $y'$ to $\rho^{-1}(x) = \sum_{i = 1}^d \lambda_i a_i$ is continuous definable.
\end{enumerate}
\end{proof}

\bibliographystyle{abbrv}
\bibliography{references}

\end{document}